\newcommand\blfootnote[1]{%
  \begingroup
  \renewcommand\thefootnote{}\footnote{#1}%
  \addtocounter{footnote}{-1}%
  \endgroup
}
\newtheorem{thm}{Theorem}[section]
\newtheorem{cor}[thm]{Corollary}
\newtheorem{lem}[thm]{Lemma}
\theoremstyle{definition}
\newtheorem{defn}[thm]{Definition}
\theoremstyle{remark}
\newtheorem{rem}{Remark}
\numberwithin{equation}{section}
\newcommand{\norm}[1]{\left\Vert#1\right\Vert}
\newcommand{\Real}{\mathbb R}
\DeclareRobustCommand*\cal{\@fontswitch\relax\mathcal}
\newcommand{\RR}{\mathbb{R}}
\newcommand\cyr{\fontencoding{OT2}\fontfamily{wncyr}\selectfont
\language\fakelanguage}
\DeclareTextFontCommand{\textcyr}{\cyr}
\title{Stochastic Functional Differential Equations and Feynman-Kac Formula}
\author{Stefano Belloni \textsuperscript{1,2}}
\date{} 
\begin{document}

\maketitle


%
%
%
%

\begin{abstract}

In the framework of stochastic functional differential equations (SFDE's) and the corresponding calculus developed in the recent year by F. Yan and S. Mohammed, we provide a series of representation formulae for a variety of highly degenerate functional differential equations of the type of the Feynman-Kac formulae. More precisely, we study the stochastic process satisfying regular SFDE's with killing and absorbing boundary, we give the differential equation to be solved in order to compute the distribution of the first exit time from a regular domain and apply our results to a model describing bacterial motility and to the derivation of a path-dependent Black-Scholes equation. 

\end{abstract}

\blfootnote{\emph{2010 Mathematics Subject Classification:} 60F05.\\
\emph{Keywords:} Feynman-Kac formula, stochastic functional differential equations\\
University of Heidelberg\\
Im Neuenheimer Feld 267\\
69120 Heidelberg\\
\emph{Email address:} stefano.belloni@bioquant.uni-heidelberg.de\\\\
1- {Center for Modeling and Simulation in the Biosciences (BIOMS)}\\
2- {Interdisciplinary Center for Scientific Computing (IWR)}}

\section{Introduction}

The present article deals with some extensions of the basic Feynman-Kac representation formula stated by Yan and Mohammed in \cite{YM05} in the framework of functional stochastic differential equations (SFDE).\\ 
The mathematical literature is rich in generalizations and variations of the Feynman-Kac formula. Following \cite{MF85} the Feynman-Kac formula represents the solution of the problem:
\begin{equation*}
  \begin{cases}
    \frac{\partial u}{\partial t}(t,x)+L_tu(t,x)+c(t,x)u(t,x)=f(t,x), &
    (t,x)\in[0,T)\times\RR^d \\
    u(T,x)=g(x), & x\in\RR^d
  \end{cases}
\end{equation*}
in terms of the diffusion process corresponding to the operator $L_t$, given by
$$L_tu(t,x)=\frac{1}{2}\sum_{i,j=1}^da_{i,j}(t,x)\frac{\partial^2}{\partial{x_i}\partial{x_j}}u(t,x)+\sum_{i=1}^d b(t,x)\frac{\partial}{\partial x_{i}}u(t,x)$$
which is intimately connected with It\^o's Lemma and the martingale problem for SDE (see \cite{VS79}).\\
We consider a degenerate linear functional partial differential equation (FPDE):
\begin{align*}
\partial_t u(t,x,\eta)=&Su(t,x,\eta)+{D_xu(t,x,\eta)}(H(t,x,\eta)) +c(x,\eta)u(t,x,\eta)\\
&+\frac{1}{2}\sum_{j=1}^m{D^2_{xx}u(t,x,\eta)}(G(t,x,\eta)(\mathbf{e}_j)\otimes G(t,x\eta)(\mathbf{e}_j))
\end{align*}
where $(e_j)_{j=1\ldots m}$ is a normalized base of $\Real^m$, $(t,x,\eta)\in[0,T]\times\Real^d\times L^2([-r,0],\Real^d)$ and $S$ is the \emph{shift operator} (see Section 2).We study the associated Cauchy's problem and, given a domain in $\Real^d\times L^2([-r,0],\Real^d)$, the corresponding Dirichlet and mixed problem. We derive representation formulae in terms of the average over the paths of a functional of the related SFDE's.\\

In the study and formulation of an SFDE, the segment process of a continuous-time stochastic process, defined for $t\in[0,T]$ and $s\in[-r,0]$ as $X_t:=X(t+s)$, is an important ingredient. Yan and Mohammed  developed the above stochastic segment integral and its calculus \cite{YM05}. Although it is possible to define the stochastic integral with respect to an infinite dimensional martingale (\cite{SAV84,PZ92}), one cannot apply this definition to the Brownian segment process because it is not a $L^2([-r,0];\Real^d)$-valued martingale. With the help of anticipating stochastic calculus \cite{NP88} they defined the segment integral and proven It\^{o}'s formula for segments of solutions of SFDE's.\\

In the setting of classical PDE's, the Feynman-Kac formula for SDE's whose coefficients do not depended on the history of the process, also represents a link between functionals of the diffusion process and PDEs, for example the exit time of a diffusion process from a domain \cite{MF85}.\\
The distribution of the first exit times for random processes are key quantities in many scientific fields, such as mathematical physics, neuroscience, economics or mathematical finance. Unfortunately, closed form
solutions to this problem are not attainable except in a few special cases \cite{PW08,MF85}. With the help of the Feynman-Kac formula, we derive which is the FPDE that the first exit times distribution satisfies (as a viscosity solution).\\

The article is organized as follows. in Section 2 the main definitions are presented, and a summary of the main results from \cite{NP88,YM05} is provided. In Section 3 the Feynman-Kac representation formula is proven and we discuss the reverse result in terms of viscosity solutions. Section 4 deals with the Feynman-Kac formula for a domain in $\Real^d\times L^2([-r,0],\Real^d)$. In Section 5 we present the problem of the first exit time distribution for SFDE's. Finally, in Section 6, we present some applications of the formula.


\section{Main Definitions and Notations}
\subsection{SFDE and Weak Infinitesimal Generator}

In the present section we summarize the results in \cite{NP88,YM05} about SFDE's and relative Functional It\^{o} calculus.
Let consider the following $d$-dimensional SFDE with bounded memory $r\in[0,\infty)$:
\begin{equation}\label{Eq_1}
dX(s)=H(s,X_s)ds+G(s,X_s)dW(s),\quad s\in[0,T]
\end{equation}
with the initial condition
$$(t,X_t)=(t,X_0,\eta_t)\in[0,T]\times L^2(\Omega,\Real^d;\mathcal{F}(t)) \times L^2(\Omega,L^2([-r,0],\Real^d);\mathcal{F}(t)),$$
where $X_t$ is the segment process defined by
$$X_t(s):=X(t+s)\quad s\in[-r,0],$$ 
and the coefficients
\begin{align*}
H:&[0,T]\times L^2(\Omega,\Real^d\times L^2([-r,0],\Real^d))\to\Real^d\\
G:&[0,T]\times L^2(\Omega,\Real^d\times L^2([-r,0],\Real^d))\to\Real^{d\times n}.
\end{align*}
satisfy the standard assumption of uniform Lipschitz continuity with respect to the variable $t$, i.e.
The functions $H$ and $G$ are continuous functions that satisfy the following Lipschitz continuity.
\textbf{Assumption 1} (\emph{Lipschitz Continuity}) There exists a constant $K_{L}>0$ such that for all $(s,\phi)$ and $(t,\psi)\in[0,T]\times L^2(\Omega,\mathbf{C})$
$$\mathbb{E}\Big[|H(s,\phi)-H(t,\psi)|+|G(s,\phi)-G(t,\psi)|\Big]\leq K_L\Big(|t-s|+\norm{\phi-\psi}_{L^2(\Omega,\mathbf{C})}\Big).$$
We suppose that $F$ and $G$ satisfy either\\ 
\textbf{Assumption 2} There exists a constant $K_G>0$ such that
$$\mathbb{E}\Big[|H(s,\phi)|+|G(s,\phi)|\Big]\leq K_G\Big(1+\mathbb{E}[\norm{\phi}]\Big),$$
or the monotonicity one:\\
\textbf{Assumption 3} For each compact subset $C\subset\mathbf{C}$, there exists a number $K_C$ and some
$r_C\in(0, r)$ such that for all $x, y \in \mathbf{C}$ with $x(s) = y(s)$ for all $s\in [r,r_C]$
$$2\langle H(x)-H(y), x(0)-y(0)\rangle + \norm{G(x)-G(y)}^2\leq K_C\norm{x - y}^2 .$$

Under these hypotheses there exists a strong and unique solution of the equation in the following sense (see \cite{A97}):

\begin{thm}
Suppose that \textbf{Assumption 1} and \textbf{Assumption 2} (or \textbf{Assumption 3}) hold. Then the SFDE (\ref{Eq_1}) has a unique strong solution $\{X(s;t,\psi_t),s\in[t-r,T]\}$, i.e.$X\in\mathcal{L}^2(\Omega,\mathbf{C}([t-r,T]),\Real^m)$ adapted to the filtration generated by the Bownian motion and unique up to equivalence in $\mathcal{L}^2(\Omega,\mathbf{C}([t-r,T]),\Real^m)$.
\end{thm}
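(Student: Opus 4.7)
The plan is to follow the classical Picard iteration scheme, adapted to the functional setting, in the Banach space $\mathcal{L}^2(\Omega,\mathbf{C}([t-r,T]),\Real^m)$ endowed with the norm $\norm{Y}^2 = \mathbb{E}[\sup_{s\in[t-r,T]} |Y(s)|^2]$. First I would define $X^0(s) = \eta_t(s-t)$ for $s\in[t-r,t]$ and $X^0(s) = X_0$ for $s\in[t,T]$, and then iterate
\begin{equation*}
X^{n+1}(s) = X_0 + \int_t^s H(u, X^n_u)\,du + \int_t^s G(u, X^n_u)\,dW(u),\qquad s\in[t,T],
\end{equation*}
keeping the initial segment $X^{n+1}_t = \eta_t$ fixed. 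A routine application of the Burkholder--Davis--Gundy and Hölder inequalities, together with Assumption 1 and the linear growth given by Assumption 2, shows that each $X^n$ belongs to $\mathcal{L}^2(\Omega,\mathbf{C}([t-r,T]),\Real^m)$ with an a priori bound uniform in $n$.

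Next I would establish the Cauchy property. Setting $\Delta^n(s) := \mathbb{E}[\sup_{u\in[t-r,s]}|X^{n+1}(u)-X^n(u)|^2]$, BDG and the Lipschitz Assumption 1 give an inequality of the form
\begin{equation*}
\Delta^n(s)\ \leq\ C\int_t^s \mathbb{E}\bigl[\norm{X^n_u - X^{n-1}_u}_{L^2([-r,0],\Real^d)}^2\bigr]\,du\ \leq\ C\int_t^s \Delta^{n-1}(u)\,du,
\end{equation*}
where the second inequality uses the fact that the $L^2$ norm of the segment is dominated by the sup norm on $[u-r,u]$. Iterating and summing yields a geometric bound $\Delta^n(T)\leq (CT)^n/n!$, so $(X^n)$ is Cauchy in the stated Banach space. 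Passing to the limit, using continuity of $H,G$ and dominated convergence, exhibits $X$ as a strong solution, and the same Gronwall argument applied to two candidate solutions yields uniqueness up to indistinguishability.

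Under Assumption 3 the direct Picard contraction in $L^2$ is no longer available, because monotonicity only controls the combination $2\langle H(x)-H(y),x(0)-y(0)\rangle+\norm{G(x)-G(y)}^2$ rather than $H$ and $G$ separately. Here I would instead localize (introducing stopping times $\tau_N = \inf\{s: |X(s)|\geq N\}\wedge T$ to reduce to a compact subset $C\subset\mathbf{C}$) and apply Itô's formula to $|X(s)-Y(s)|^2$ for two solutions, obtaining
\begin{equation*}
\mathbb{E}\bigl[|X(s\wedge\tau_N)-Y(s\wedge\tau_N)|^2\bigr]\ \leq\ K_C\int_t^s \mathbb{E}\bigl[\norm{X_u - Y_u}^2\bigr]\,du,
\end{equation*}
from which Gronwall and $N\to\infty$ give uniqueness; existence follows from a truncation argument combined with the Lipschitz case.

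The main obstacle, in both regimes, is handling the functional dependence on the whole segment $X_u$ rather than the pointwise value $X(u)$: one must choose the working norm so that the $L^2([-r,0])$ norm of the segment can be re-expressed in terms of a Gronwall-compatible quantity in the time variable $s$. The initial datum living on $[t-r,t]$ has to be treated as a fixed, deterministic-in-structure piece of each iterate so that the stochastic integrals only run over $[t,s]$; once this bookkeeping is set up cleanly, the remainder of the proof is the standard Picard/monotonicity machinery invoked from \cite{A97}.
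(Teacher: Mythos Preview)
The paper does not give its own proof of this theorem: it is stated with a reference to \cite{A97} (and implicitly to \cite{MohammedSFDE}) and then used as a black box. So there is no argument in the paper to compare your proposal against; the authors simply import the existence--uniqueness theory from Arriojas/Mohammed.

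That said, your outline is exactly the standard argument one finds in those references: Picard iteration in $\mathcal{L}^2(\Omega,\mathbf{C})$ with BDG $+$ Gronwall under Assumptions~1--2, and It\^o's formula applied to $|X(s)-Y(s)|^2$ together with localization under the monotonicity Assumption~3. One small point to watch: Assumption~1 as written in the paper bounds $\mathbb{E}\bigl[|H(s,\phi)-H(t,\psi)|+|G(s,\phi)-G(t,\psi)|\bigr]$, i.e.\ an $L^1$ quantity, by the $L^2(\Omega,\mathbf{C})$ distance. Your BDG step needs a second-moment Lipschitz estimate on $G$, so strictly speaking you should either square the assumption (the usual formulation in \cite{MohammedSFDE,A97}) or insert a brief justification; otherwise the inequality $\Delta^n(s)\leq C\int_t^s \Delta^{n-1}(u)\,du$ does not follow directly from the hypothesis as stated here. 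Apart from that bookkeeping, your plan matches the cited proof.
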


Following the theory developed by Mohammed on SFDE's \cite{MohammedSFDE,A97}, it is possible to associate to (\ref{Eq_1}) a Markov process and a Markov family 
$$\Big\{( ^\eta X_t,^\eta X_t(0)):t\in[0,T],(\eta(0),\eta)\in \Real^d\times L^2([-r,0],\Real^d)\Big\}$$
on $\Real^d\times L^2([-r,0],\Real^d)$
with transition probability
$$p(t_1,\eta,\eta(0);t_2,B_C,B_{\Real^d})=\mathbf{P}\Big\{( ^\eta X_{t_2}, ^\eta X_{t_2}(0))\in B_{ L^2([-r,0],\Real^d)}\times B_{\Real^d} \Big| \mathcal{F}_{t_1}\Big\},$$
where $B_{ H}$ is a Borel set of $H$, that satisfies the condition:
$$\mathbf{P}\Big\{( ^\eta X_{t_2}, ^\eta X_{t_2}(0))\in B_{ L^2([-r,0],\Real^d)}\times B_{\Real^d} | \mathcal{F}_{t_1}\Big\}$$
$$=\mathbf{P}\Big\{( ^\eta X_{t_2}, ^\eta X_{t_2}(0))\in B_{ L^2([-r,0],\Real^d)}\times B_{\Real^d} | (^\eta X_{t_1}(0),^\eta X_{t_1})\Big\}.$$
For a Borel measurable function $\Phi:\Real^d\times L^2([-r,0],\Real^d)\to\Real$, we also define the Shift Operator
$$\Gamma_t(\Phi)(v,\phi):=\Phi(v,\tilde{\phi_t})$$
where for each $(v,\phi)\in \Real^d\times L^2([-r,0],\Real^d)$ and $t\geq0$ $\tilde\phi:[-r,\infty)\to\Real^n$ is defined by
$$\tilde\phi(t):=
\begin{cases}
v&t>0\\
\phi(t)&t\in[-r,0].
\end{cases}
$$
Let us define the operator
$$S(\Phi)(\phi):=\lim_{t\to0}\frac{1}{t}\Big[\Gamma_t(\Phi)(v,\phi)-\Phi(v,\phi)\Big]$$
whose domain $\mathcal{D}(S)$ is defined as the set of functions for which the limit exists.
We can study the infinitesimal generator of the SFDE \cite{A97}:
\begin{thm}\label{Mohammedgenerator}
Suppose that $\Phi:[0,T]\times \Real^d\times L^2([-r,0],\Real)\to\Real$ belongs to the class $\mathbf{C}_b$ of bounded uniformly continuous functions with the supremum norm $\norm{\Phi}_{\mathbf{C}_b}:=\sup\{\Phi(t,v,\theta):(t,v,\theta)\in[0,T]\times\Real^d\times L^2([-r,0],\Real^d)\}$, and satisfies the following conditions:
\begin{itemize}
\item $\Phi\in\mathcal{D}(\Gamma)$.
\item $\Phi\in\mathbf{C}^2$.
\item $D\Phi$ and $D^2\Phi$ are globally bounded.
\item $D^2\Phi$ is globally Lipschitz on $\Real^d\times L^2([-r,0],\Real^d)$ uniformly with respect to $t$.
\end{itemize}
Then $\Phi\in\mathcal{D}(\mathcal{A}_w)$ and for each $(t,v,\phi)\in [0,T]\times\Real^d\times L^2([-r,0],\Real^d)$
\begin{align*}
\mathcal{A}_w\Phi(t,\phi)=&\lim_{\epsilon\to0}\frac{\mathbb{E}[\Phi(t+\epsilon,X(t+\epsilon),X_{t+\epsilon})]-\Phi(t,v,\phi_t)}{\epsilon}\\
=& \frac{\partial }{\partial t}\Phi(t,v,\phi)+S(\Phi)(t,v,\phi)+D_1\Phi(t,v,\phi)(H(t,v,\phi))\\
&+\frac{1}{2}\sum_{j=1}^mD_{11}^2\Phi(t,v,\phi)(G(t,v,\phi)(\mathbf{e}_j),G(t,\phi)(\mathbf{e}_j))
\end{align*}
where $\mathbf{e}_j, j=1\ldots n$ is the $j-$th vector of the standard basis in $\Real^m$ 
\end{thm}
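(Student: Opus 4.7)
The strategy is to apply the segment It\^o formula from \cite{YM05} to $\Phi(t+\epsilon, X(t+\epsilon), X_{t+\epsilon})$ on the interval $[t, t+\epsilon]$, take expectations so that the Brownian stochastic integral disappears, divide by $\epsilon$, and pass to the limit $\epsilon \to 0$. The characteristic feature to exploit is that the segment It\^o formula splits the infinitesimal evolution of the segment argument into a deterministic ``passive shift'' part (producing the operator $S$) and an ``active'' evolution driven by the underlying SFDE (producing the drift, diffusion and second-order terms).

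Concretely, I would first write down the segment It\^o expansion
\begin{align*}
\Phi(t{+}\epsilon, X(t{+}\epsilon), X_{t+\epsilon}) - \Phi(t,v,\phi)
 &= \int_t^{t+\epsilon} \partial_s \Phi \, ds + \int_t^{t+\epsilon} S(\Phi) \, ds \\
 &\quad + \int_t^{t+\epsilon} D_1\Phi\bigl(H(s,X(s),X_s)\bigr) ds + \int_t^{t+\epsilon} D_1\Phi\bigl(G(s,X(s),X_s)\, dW(s)\bigr) \\
 &\quad + \tfrac{1}{2}\sum_{j=1}^m \int_t^{t+\epsilon} D_{11}^2\Phi\bigl(G\mathbf{e}_j, G\mathbf{e}_j\bigr) ds,
\end{align*}
where each integrand is evaluated along the trajectory $(s,X(s),X_s)$. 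The separate appearance of $S(\Phi)$ and $D_1\Phi$ comes from splitting the change of the segment between $t$ and $t+\epsilon$: for $u \in [-r,-\epsilon]$ one has $X_{t+\epsilon}(u) = X_t(u+\epsilon)$ (pure shift, giving the operator $S$), while for $u \in [-\epsilon,0]$ the segment records the newly produced piece of trajectory (giving drift and diffusion contributions via standard It\^o). Taking expectations annihilates the stochastic integral, since $D_1\Phi$ is globally bounded and $G$ has the linear growth granted by Assumption~2 (or the control provided by Assumption~3), so the integrand is in $L^2$ and the Brownian integral is a true martingale.

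Dividing the expected expansion by $\epsilon$ and applying the fundamental theorem of calculus, the formula for $\mathcal{A}_w\Phi$ reduces to the four identities
$$ \lim_{\epsilon \to 0} \frac{1}{\epsilon}\, \mathbb{E}\!\left[ \int_t^{t+\epsilon} F(s,X(s),X_s)\,ds \right] = F(t,v,\phi), $$
one for each deterministic integrand $F \in \{\partial_s\Phi,\ S(\Phi),\ D_1\Phi(H),\ D_{11}^2\Phi(G\mathbf{e}_j,G\mathbf{e}_j)\}$. Each such identity follows by dominated convergence from the $\mathbf{C}_b$ hypothesis on $\Phi$, the global Lipschitz property of $D_1\Phi$ and $D_{11}^2\Phi$, the continuity of $H$ and $G$ (Assumption~1), and the $L^2$-continuity of $s \mapsto X_s$ in $L^2(\Omega,\mathbf{C})$, which is part of the strong-solution theorem.

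The main obstacle is the treatment of the shift term. Unlike $\partial_t\Phi$, $D_1\Phi$ and $D_{11}^2\Phi$, the operator $S$ is defined only through a limit and is not a genuine derivative, so its continuity along trajectories cannot be read off directly from the smoothness of $\Phi$. This is precisely where the assumption $\Phi \in \mathcal{D}(\Gamma)$ must be invoked: it guarantees that $S(\Phi)$ is well defined and that the identity $\Gamma_\epsilon(\Phi) - \Phi = \int_0^\epsilon \Gamma_u(S\Phi)\,du$ holds in the appropriate sense, which is exactly what is needed in order to average the shift contribution over $[t,t+\epsilon]$ and pass to the limit. A secondary subtlety is promoting the deterministic-in-$\eta$ statement of the generator to the stochastic setting where $X_s$ is itself random; this is handled by the $\mathcal{F}(t)$-measurability of the initial datum and the strong continuity of $s \mapsto X_s$ inherited from Theorem~2.1.
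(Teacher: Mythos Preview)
The paper does not prove this theorem. It is stated in Section~2.1 as background material and is attributed to prior work: the sentence immediately preceding it reads ``We can study the infinitesimal generator of the SFDE \cite{A97}'', and the result goes back to Mohammed's monograph \cite{MohammedSFDE}. There is therefore no proof in the paper to compare your proposal against.

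That said, a few remarks on your approach versus the original one are worth making. Mohammed's original argument does \emph{not} pass through the segment It\^o formula of \cite{YM05}; it predates that formula by two decades. Instead it proceeds by a direct second-order Taylor expansion of $\Phi$ in the $(v,\phi)$ variables, combined with moment estimates of the type $\mathbb{E}\|X_{t+\epsilon}-\tilde\phi_\epsilon\|^2=O(\epsilon)$ and a separate analysis of the shift semigroup $\Gamma_\epsilon$. Your route through Theorem~\ref{ItoFormula} is in principle legitimate, but the expansion you wrote down is not the full It\^o formula: you have suppressed the terms $\frac{\partial^2 f}{\partial\eta^2}(\Theta_s)$, $\frac{\partial^2 f}{\partial\eta\partial x}[(u\Lambda)_s X(s)]$ and $\frac{\partial^2 f}{\partial x\partial\eta}[u(s)D_s X_s]$, and you have replaced the segment integral $\int\langle\partial_\eta\Phi,\,dX_s\rangle_V$ by $\int S(\Phi)\,ds$ without justification. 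Showing that the suppressed second-order terms vanish in the limit and that the segment integral collapses to the shift operator is exactly the nontrivial work the paper carries out later, in the proof of Theorem~\ref{FeynmannkacStef} (see the treatment of terms (iii), (v), (vi), (vii) there). So your sketch is on the right track conceptually, but as written it hides the genuine analytic content behind an oversimplified It\^o expansion.
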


%

\subsection{Anticipating Calculus and It\^o's Formula for SFDE}

The It\^{o} formula derived by Mohammed and Yan for processes solutions of SFDE is proven via anticipating calculus methods. To understand the need for anticipating calculus in such an intrinsically adapted setting, it is instructive to look at the following simple one-dimensional SDDE, where $g$ is a regular function:
$$dX(t)=g(X(t-1),X(t))dW(t), t\leq 0$$ $$X(t)=W(t), t\in[-1,0].$$
Formally for $t\in(0,1]$
\begin{align*}
dg(X(t-1),X(t))=&dg(W(t-1),X(t))\\
=&\frac{\partial g}{\partial x}(W(t -1),X(t)) dW(t - 1)\\
&+\frac{\partial g}{\partial y}(W(t - 1),X(t))g(X(t-1),X(t))dW(t)\\
&+\textrm{ second order terms}
\end{align*}
Note that although the coefficient $g(X(t -1),X(t))$ is $\mathcal F_t$ -measurable, the first
term $\frac{\partial g}{\partial x}(W(t -1),X(t)) dW(t - 1)$ on the right-hand side of the last equality
is an anticipating differential.

We denote by $D$ the Malliavin differentiation operator. Let $F$ be a random variable which belongs to the domain of $D$ and $T=[0,T]$. Its derivative $DF$ is a stochastic process $\{D_tF:t\in T\}$. The derivative $DF$ may be considered as a random variable taking value in the Hilbert space $H=L^2(T,\Real^n)$. More generally the Nth derivative of $F$, $D^NF:=D^{j_1}_{s_1}\cdots D^{j_N}_{s_N}$ is an $H^{\hat\otimes_2 N}$ random variable. For any positive integer $N$ and real number $p>1$ we denote by $\mathbb{D}^{N,p}$ the Banach space of all the random variables having all the $i$-th derivatives belonging to $L^p(\Omega,H^{\hat\otimes_2 N})$ with the norm defined by
$$\norm{F}_{N,p}=\norm{F}+\norm{\norm{D^NF}_{(2)}}_{p},$$
where $\norm{\cdot}_{(2)}$ is the Hilbert-Schmidt norm in $H^{\hat\otimes_2 N}$
$$\norm{D^NF}^2_{(2)}=\sum_{j_1,\cdot j_N=1}^n\int_{T^N}\mathbb{E}[(D^NF)^{j_1,\ldots,j_N}_{s_1,\ldots,s_N}]^2ds_1\ldots ds_N$$

We denote by $\delta$ the divergence operator, and by $\delta(u)$ the Skorohod stochastic integral of the process $u$. $\delta$ is the adjoint operator of $D$. We denote by $\mathbb{L}^{1,2}$ the class of all processes $u\in L^2(T\times\Omega)$ such that $u(t)\in\mathbb{D}^{1,2}$ for almost all $t$ and there exist a measurable version of the process $D_s u(t)$ (which depends on two parameter) satisfying $E\int_T\int_T(D_su(t))^2dsdt<\infty$. $\mathbb{L}^{1,2}$ is a Hilbert space with the norm
$$\norm{u}^2_{1,2}=\norm{u}^2_{L^2(T\times\Omega)}+\norm{Du}^2_{L^2(T^2\times\Omega)}.$$
Note that $\mathbb{L}^{1,2}$ is isomorphic to $L^2(\Omega,\mathbb{D}^{1,2})$. For every $p>1$ and any positive integer $k$ we denote by $\mathbb{L}^{k,p}$ the space $L^2(\Omega,\mathbb{D}^{k,p}).$\\\\
Now let us define the \emph{segment operator} $\mathcal{O}:H\oplus V\to H\hat{\otimes}_2 V$
$$\mathcal{O}\phi(t,s):=\phi(t+s),\quad t\in[-r,0],s\in[0,T]\quad \phi\in H\oplus V$$
Let $\mathcal{O}_t\phi=\phi_t$ and $\mathcal{O}^\ast:H\hat{\otimes}_2 V\to H\oplus V$ the adjoint.
Denote by $P_H$ (resp $P_V$) the projection from $H\oplus V$ on $H$ (resp. $V$), and define $\mathcal{O}^\ast_H=P_H\circ \mathcal{O}^\ast$ 

\begin{defn} Suppose $W = (W(t))_{t\in[0,T]}$ is a $m$-dimensional standard Brownian motion. Denote
by $\delta$ the divergence operator and $Dom(\delta)$ its domain. For a two parameter process $X\in(\mathcal{O}_H^\ast)^{-1}(Dom(\delta))$ the Skorohod segment integral of X with respect to the Brownian segment $(W_t)_{t\in[0,T]}$ is defined as
$$\int_0^T\langle X_t,W_t\rangle=\delta(\mathcal{O}_H^\ast X)$$
\end{defn}
\subsubsection{It\^{o} Formula} Consider the SFDE:
\begin{equation*}
X(t)=\tilde\eta_0(t)+\int_0^{t} v(s)ds+\int_0^{t} u(s)dW(s),\quad t\geq 0
\end{equation*}
with initial condition $\tilde\eta\in L^2([-r,0],\Real^d)$ and coefficients $u:T\times\Omega\to L(\Real^n,\Real^m)$ and $v:T\times\Omega\to \Real^m$ that may not be adapted to the Brownian filtration $(\mathcal{F}_t)_{t\geq0}$.

\begin{thm}[It\^{o}'s Formula]\label{ItoFormula} Let $f=f(t,\eta,x)\in\mathbf{C}^1_b(T\times V\times \Real^m)$ with second bounded derivative, $u\in\mathbb{L}^{1,2}$ and $v\in\mathbb{L}^{1,2}$. Then the following It\^{o} formula holds:
\begin{align*}
f(t,X_t,X(t))=&\,\,f(0,X_0,X(0))\\&+\int_0^t\frac{\partial f}{\partial s}(s,X_s,X(s))ds+\int_0^t\langle\frac{\partial f}{\partial\eta}(s,X_s,X(s),dX_s\rangle_V\\
&+\int_0^t\frac{\partial f}{\partial x}(s,X_s,S(s))dX(s)
+\int_0^t \frac{\partial^2f}{\partial\eta^2}(s,X_s,X(s))(\Theta_s)ds
\\
&+\int_0^t\frac{\partial^2 f}{\partial\eta\partial x}(s,X_s,X(s))[(u\Lambda)_sX(s)]ds\\&+\int_0^t\frac{\partial^2}{\partial x\partial\eta}(s,X_s,X(s))[u(s)D_sX_s]ds\\
&+\frac{1}{2}\sum_{i=1}^d\int_0^t\frac{\partial^2f}{\partial x^2}(s,X_s,X(s))[(\nabla_+^iX)(s)\otimes u^{\dot i}(s)]ds
\end{align*} 
Where
$$\Theta_s(\alpha,\beta)=\frac{1}{2}((u\Lambda)_sX_s(\alpha,\beta)+(u\Lambda)_sX_s(\beta,\alpha))$$
$$(u\Lambda)_sX_s(\alpha,\beta)=\mathbf{I}_{\{0\leq s+\alpha\wedge\beta\}}u(s+\alpha)D_{s+\alpha}X(s+\beta)$$
$$(\nabla_+^iX)(s)=\lim_{\epsilon\to0}(D_t^iX(t+\epsilon)+D_t^iX(t-\epsilon))$$
\begin{equation*}
(u\Lambda)_sX(s)(\alpha):=u(s+\alpha)D_{s+\alpha}X(s)\mathbf{I}_{\{s+\alpha\\geq0}
\end{equation*}
\end{thm}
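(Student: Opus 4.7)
The plan is to reduce the formula to the Nualart--Pardoux anticipating It\^o formula \cite{NP88}, applied to the joint process $(X_t,X(t))$ taking values in $V\times\Real^d$ with $V=L^2([-r,0],\Real^d)$, and then to disentangle the segment contribution through the segment operator $\mathcal{O}$ and its adjoint $\mathcal{O}_H^\ast$. I would first approximate $u,v\in\mathbb{L}^{1,2}$ by smooth simple processes $u_n,v_n$ so that the corresponding solution $X^n$ is smooth in the Malliavin sense and its segment $X^n_t$ is a $\mathbf{C}^1$-curve in $V$ off a discrete set of times. For such regular approximants, a classical Taylor expansion of $f(t,X^n_t,X^n(t))$ combined with the finite-dimensional anticipating It\^o formula of \cite{NP88} reproduces the claimed identity termwise, and it then suffices to pass to the limit in $\mathbb{L}^{1,2}$.

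I would next identify each term. The first-order contributions in $t$ and $x$ give $\int_0^t\partial_s f\,ds$ and $\int_0^t\partial_x f\,dX(s)$, the latter an anticipating Skorohod integral. The first-order term in $\eta$ is the segment integral $\int_0^t\langle\partial_\eta f,dX_s\rangle_V=\delta(\mathcal{O}_H^\ast(\partial_\eta f))$ introduced in the previous subsection. The pure second-order term in $x$ picks up a quadratic correction, but because $u$ is not adapted the It\^o correction becomes $(\nabla_+^iX)(s)\otimes u^{\dot i}(s)$ rather than $u(s)\otimes u(s)$: this is precisely the symmetric trace that arises in \cite{NP88} when one differentiates a Skorohod integral of a nonadapted integrand.

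The pure second-order term in $\eta$ is more delicate. The Malliavin derivative $D_{s+\alpha}X(s+\beta)$ must be evaluated at two parameters $\alpha,\beta\in[-r,0]$ and is active only where both arguments are nonnegative, giving the correction $(u\Lambda)_sX_s(\alpha,\beta)=\mathbf{I}_{\{0\leq s+\alpha\wedge\beta\}}u(s+\alpha)D_{s+\alpha}X(s+\beta)$; since $\partial_\eta^2 f$ acts as a symmetric bilinear form on $V$, this correction enters through its symmetrization $\Theta_s$. Finally, the mixed derivative $\partial^2_{\eta x}f$ produces two adjointly related cross terms, one from $(u\Lambda)_sX(s)$ and one from $u(s)D_sX_s$, which account for the two mixed integrals in the statement.

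The main obstacle will be controlling all these terms uniformly in the approximation, and in particular justifying convergence of the segment Skorohod integral $\delta(\mathcal{O}_H^\ast(\partial_\eta f(\cdot,X^n_\cdot,X^n(\cdot))))$ as $n\to\infty$. This requires continuity of $\mathcal{O}_H^\ast$ from $\mathbb{L}^{1,2}$ into $\mathrm{Dom}(\delta)$, which follows from the isomorphism $\mathbb{L}^{1,2}\simeq L^2(\Omega,\mathbb{D}^{1,2})$ together with the global boundedness of $\partial_\eta f$ and $\partial_\eta^2 f$ assumed on $f$. Once this continuity is in hand, together with the analogous bounds for $\nabla_+$ and the $(u\Lambda)_s$-brackets, the remaining terms become standard Bochner or Lebesgue integrals and convergence in $L^2(\Omega)$ is routine.
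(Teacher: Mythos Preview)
The paper does not prove this theorem. Section~2 is explicitly a summary of results from \cite{NP88,YM05}, and Theorem~\ref{ItoFormula} is quoted verbatim from Yan and Mohammed \cite{YM05} as a preliminary tool; there is no proof in the paper to compare your proposal against.

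That said, your outline is in the spirit of the original Yan--Mohammed argument: the proof in \cite{YM05} does rest on the Nualart--Pardoux anticipating calculus, an approximation by regular integrands, and a termwise identification followed by a limit in $\mathbb{L}^{1,2}$. One point in your sketch deserves care: you invoke the ``finite-dimensional anticipating It\^o formula of \cite{NP88}'' for the approximants, but the segment $X^n_t$ takes values in the infinite-dimensional space $V=L^2([-r,0],\Real^d)$, so the Nualart--Pardoux formula does not apply directly. The actual argument in \cite{YM05,FY} proceeds through a further finite-dimensional reduction (projections of $V$ or cylindrical test functions) before the anticipating formula can be used, and the passage to the limit in $V$ is a separate step from the passage $u_n\to u$ in $\mathbb{L}^{1,2}$. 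Your continuity claim for $\mathcal{O}_H^\ast$ from $\mathbb{L}^{1,2}$ into $\mathrm{Dom}(\delta)$ is correct in outline but also needs the two-parameter estimate on $D_s X_t$ that comes from the SFDE structure, not merely from $u\in\mathbb{L}^{1,2}$.
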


%


\section{The Feynman-Kac Formula}

In this section we prove the Feynman-Kac Formula for SFDE and extend the results in \cite{YM05} and \cite{CY}.\\
Let us consider the following autonomous SFDE:
\begin{equation}\label{C08-2.28}
dX(s)=H(X(s),X_s)ds+G(X(s),X_s)dW(s),\quad s\in[0,T]
\end{equation}
with the initial datum $\eta\in L^2(\Omega,L^2([-r,0],\Real^d)),$ at time $t=0$.\\\\
\textbf{Assumption $A$}: $H\in\mathbb{L}^{1,2}$ and $G\in\mathbb{L}^{1,2}$ are adapted functions that satisfy the hypothesis of Lipschitz continuity with respect to both arguments.\\
\begin{rem}
We stress that \textbf{Assumption A} guarantees existence and uniqueness of the solution of equation (\ref{Eq_1})  (see \cite{MohammedSFDE}).
\end{rem}

\subsection{Representation Formula}

The following theorem extends Theorem 9.5 in \cite{YM05}.

\begin{thm}\label{FeynmannkacStef}
Suppose $f\in\mathcal{D}(\mathcal{A}_w)$ and $c:L^2([-r,0])\times\Real^d\to\Real_+$ bounded and Lipschitz continuous. If $u$ solves weakly
$$\frac{\partial}{\partial t}u(t)+\mathcal{A}_w(u(t))+c\cdot u(t)=0$$
$$u(T,\eta,x)=f(\eta,x)$$
with $\mathcal{A}_w$ as defined in Theorem \ref{Mohammedgenerator}, then 
$$u(t,\eta,x):=\mathbb{E}_{(t,\eta,x)}\Big[f( ^\eta X_T,  ^\eta X_T(0))e^{\int_t^T c( ^\eta X_s,  ^\eta X_s(0))ds}\Big].$$
\end{thm}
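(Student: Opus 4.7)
The plan is to apply the SFDE Itô formula (Theorem 2.2) to a suitable process built from $u$ and the exponential killing factor, show it has vanishing drift thanks to the PDE, and then read off the representation from the martingale property. Fix $(t,\eta,x)\in[0,T]\times L^2([-r,0],\Real^d)\times\Real^d$ and define, for $s\in[t,T]$,
$$Y(s):=u\bigl(s,{}^\eta X_s,{}^\eta X(s)\bigr)\,M(s),\qquad M(s):=\exp\!\left(\int_t^{s}c\bigl({}^\eta X_r,{}^\eta X(r)\bigr)\,dr\right),$$
so that $Y(t)=u(t,\eta,x)$ and $Y(T)$ is exactly the random variable inside the expectation on the right-hand side. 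Since $M$ is absolutely continuous with $dM(s)=c({}^\eta X_s,{}^\eta X(s))M(s)\,ds$, the product rule gives $dY(s)=M(s)\,du(s,{}^\eta X_s,{}^\eta X(s))+u\,dM(s)$ with no covariation contribution.

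Next I would apply the anticipating Itô formula of Theorem 2.2 to $u(s,{}^\eta X_s,{}^\eta X(s))$. The hypothesis $u\in\mathcal{D}(\mathcal{A}_w)$ together with the regularity required in Theorem \ref{Mohammedgenerator} ensures that all the drift terms in the Itô expansion collapse to precisely
$$\bigl[\partial_s u+Su+D_1u\,(H)+\tfrac12\sum_{j=1}^m D_{11}^2u\,(Ge_j,Ge_j)\bigr](s,{}^\eta X_s,{}^\eta X(s))=\partial_s u+\mathcal{A}_wu,$$
with the remaining terms being (Skorohod) stochastic integrals against $dW$. Adding the $u\,dM$ contribution, the total finite-variation part of $Y$ becomes
$$M(s)\bigl[\partial_s u+\mathcal{A}_wu+c\cdot u\bigr](s,{}^\eta X_s,{}^\eta X(s))\,ds,$$
which vanishes identically by the PDE hypothesis. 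Hence $Y$ is a local martingale on $[t,T]$, and taking conditional expectations yields the desired identity
$$u(t,\eta,x)=Y(t)=\mathbb{E}_{(t,\eta,x)}\bigl[Y(T)\bigr]=\mathbb{E}_{(t,\eta,x)}\!\left[f({}^\eta X_T,{}^\eta X_T(0))\,e^{\int_t^T c({}^\eta X_s,{}^\eta X_s(0))\,ds}\right].$$

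The step I expect to be the main obstacle is upgrading the local martingale $Y$ to a genuine martingale, which is what legitimizes taking the expectation. Two issues must be handled simultaneously. First, the Itô formula of Theorem 2.2 is phrased in terms of Skorohod segment integrals that are not in general martingales; I would have to argue that, because $u$ is evaluated along the adapted SFDE solution and its derivatives $D_1u$, $D_{11}^2u$ are those of a function in $\mathcal{D}(\mathcal{A}_w)$, the relevant Malliavin traces vanish and the Skorohod integral coincides with the ordinary Itô integral. Second, I would use the boundedness of $c$ (giving $0\le M(s)\le e^{T\|c\|_\infty}$) together with the global boundedness of $D u$ and $D^2 u$ and the standard $L^p$-moment bounds for ${}^\eta X$ under Assumptions A/1--3 to produce a uniform $L^2$ bound on the integrand of the stochastic integral, which promotes the local martingale to a true martingale and justifies the expectation step.
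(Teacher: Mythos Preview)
Your overall strategy coincides with the paper's: form the product process $Y(s)=u(s,{}^\eta X_s,{}^\eta X(s))\,M(s)$, expand via the functional It\^o formula (Theorem~\ref{ItoFormula}), and use the PDE to annihilate the drift. The genuine gap is the sentence ``all the drift terms in the It\^o expansion collapse to precisely $\partial_su+\mathcal{A}_wu$, with the remaining terms being (Skorohod) stochastic integrals against $dW$.'' Compare Theorem~\ref{ItoFormula} with Theorem~\ref{Mohammedgenerator}: beyond the stochastic integrals, the anticipating It\^o formula contains the \emph{finite-variation} terms
\[
\int \frac{\partial^2u}{\partial\eta^2}(\Theta_s)\,ds,\qquad
\int \frac{\partial^2u}{\partial\eta\partial x}\bigl[(G\Lambda)_sX(s)\bigr]\,ds,\qquad
\int \frac{\partial^2u}{\partial x\partial\eta}\bigl[G(s)D_sX_s\bigr]\,ds,
\]
which involve Malliavin derivatives of the segment and are \emph{not} part of $\mathcal{A}_wu$. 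Thus the drift of $Y$ is $M\cdot(\partial_t+\mathcal{A}_w+c)u$ \emph{plus} these three corrections, and the PDE hypothesis alone does not kill them. The obstacle you flag (Skorohod vs.\ It\^o integral) is real but concerns only the martingale part; the more serious issue lies in the $ds$-terms you have overlooked. A related subtlety is the segment integral $\int\langle\partial_\eta u,\,dX_s\rangle_V$: it is not a priori a pure drift equal to $Su$ plus a martingale.

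The paper avoids both difficulties by working infinitesimally rather than asserting that $Y$ is a martingale on $[t,T]$. It fixes $t_0$, defines $q(t)=\mathbf{E}[Y(t)\mid\mathcal{F}_{t_0}]$, and computes the \emph{right derivative} $\lim_{t_2\searrow t_1}(q(t_2)-q(t_1))/(t_2-t_1)$. Conditioning on $\mathcal{F}_{t_1}$ allows one to restart the SFDE with a fresh Brownian motion vanishing on $[-r,t_1]$; this forces $\|\Theta_s\|_{(V\otimes V)^\ast}\to 0$ and makes the three extra $ds$-terms disappear in the limit $t_2\searrow t_1$. The segment integral is handled separately via the stochastic Fubini theorem (Lemma~4.2 of \cite{YM05}) and shown to contribute exactly $Su$ in that same limit. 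Once $q'_+\equiv 0$ and $q$ is continuous, a classical lemma (\cite{Yosida95}, p.~239) gives $q\equiv\mathrm{const}$, hence $q(t_0)=q(T)$ and the representation follows. To repair your direct argument you would have to show that the three anticipating drift corrections have zero expectation on all of $[t,T]$, which in effect reproduces the paper's restarting-and-limit computation.
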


\begin{proof}
Let us suppose that $u$ is a solution of the above FPDE.\\

Fix $0\leq t_0<T$. Define for all $t_0\leq t\leq T$
$$q(t):=\mathbf{E}\Big[u( t, X_t,X_t(0))e^{\int_{t_0}^t c( X_s,  X_s(0))ds}\|\mathcal{F}_{t_0}\Big].$$
Now we calculate the right derivative (if it exists) of $q(t)$.\\
Since the process $(X_t,X(t))$ is a Markov process, the following equality holds:
 \begin{align*}
q(t_2)-q(t_1)&=\mathbf{E}\Big\{\mathbf{E}\Big[u( t_2, X_{t_2},X_{t_2}(0))e^{\int_{t_0}^{t_2} c( X_s,  X_s(0))ds}\\
&\qquad\qquad-u( t_1, X_{t_1},X_{t_1}(0))e^{\int_{t_0}^{t_1} c( X_s,  X_s(0))ds}\|\mathcal{F}_{t_1}\Big]\|\mathcal{F}_{t_0}\Big\}\\
&=\mathbf{E}\Big\{\mathbf{E}\Big[u( t_2, X_{t_2},X_{t_2}(0))e^{\int_{t_1}^{t_2} c( X_s,  X_s(0))ds}-u( t_1, X_{t_1},X_{t_1}(0))\|\mathcal{F}_{t_1}\Big]\\
&\qquad\qquad\cdot e^{\int_{t_0}^{t_1} c( X_s,  X_s(0))ds}\|\mathcal{F}_{t_0}\Big\}
\end{align*}
Consider the process 
$$Y(t)=e^{\int_{t_1}^{t}c(X_s,X_s(0))ds},$$
solution of the following stochastic integral equation:
$$Y(t)=1+\int_{t_1}^{t}c(X_s,X_s(0))Y(s)ds.$$
The idea is to apply now the It\^{o}-Mohammed-Yan formula (Theorem \ref{ItoFormula}) to the process
$$h(t,X_t,X_t(0),Y(t))=u(t,X_t,X(0))\cdot Y(t)$$ 
and calculate explicitly the right derivative of the projection of the random variable $h(t)$ on the space $L^2(\mathcal{F}_{t_1})$.
We have that the processes involved are adapted (see \cite{MohammedSFDE}), in particular for the process $Y$, $D_s Y(\alpha)=0$. A straightforward use of the formula leads to
\begin{align*}
&h(t,X_{t},X_{t}(0),Y(t))=h(t_1,X_{t_1},X_{t_1}(0),Y(t_1))=\\
(i)&=\int_{t_1}^t c(X_s,X_s(0))u(s,X_s,X(s))e^{\int_{t_1}^{s}c(X_u,X_u(0))du}ds\\
(ii)&+\int_{t_1}^t\frac{\partial u}{\partial s}(s,X_s,X(s))e^{\int_{t_1}^{s}c(X_u,X_u(0))du}ds\\
(iii)&+\int_{t_1}^t\langle\frac{\partial u}{\partial\eta}(s,X_s,X(s))e^{\int_{t_1}^{s}c(X_u,X_u(0))du},dX_s\rangle_V\\
(iv)&+\int_{t_1}^t\frac{\partial u}{\partial x}(s,X_s,S(s))e^{\int_{t_1}^{s}c(X_u,X_u(0))du}dX(s)
\\
(v)&+\int_{t_1}^t \frac{\partial^2 u}{\partial\eta^2}(s,X_s,X(s))(\Theta_s)e^{\int_{t_1}^{s}c(X_u,X_u(0))du}ds\\
(vi)&+\int_{t_1}^t\frac{\partial^2 u}{\partial\eta\partial x}(s,X_s,X(s))[(G\Lambda)_sX(s)]e^{\int_{t_1}^{s}c(X_u,X_u(0))du}ds\\
(vii)&+\int_{t_1}^t\frac{\partial^2 u}{\partial x\partial\eta}(s,X_s,X(s))[G(s)D_sX_s]e^{\int_{t_1}^{s}c(X_u,X_u(0))du}ds\\
(viii)&+\frac{1}{2}\sum_{i=1}^d\int_{t_1}^t e^{\int_{t_1}^{s}c(X_u,X_u(0))du}\frac{\partial^2 u}{\partial x^2}(s,X_s,X(s))[(\nabla_+^iX)(s)\otimes G^{\dot i}(s)]ds
\end{align*} 
where
$$\Theta_s(\alpha,\beta)=\frac{1}{2}((G\Lambda)_sX_s(\alpha,\beta)+(G\Lambda)_sX_s(\beta,\alpha))$$
$$(G\Lambda)_sX_s(\alpha,\beta)=\mathbf{I}_{\{0\leq s+\alpha\wedge\beta\}}G(s+\alpha)D_{s+\alpha}X(s+\beta)$$
$$(\nabla_+^i X)(s)=\lim_{\epsilon\to0}(D_t^iX(t+\epsilon)+D_t^iX(t-\epsilon))$$

We treat now every single term (identified by the Roman number ($\alpha$)), calculating the limit for $t$ approaching $t_1$ of the quantity $\frac{1}{t-t_1}\cdot \mathbb{E}[(\alpha)\|\mathcal{F}_{t_1}]$.\\
Since we consider the limit of the projection on the $\sigma$-algebra $\mathcal{F}_{t_1}$, we consider for $t>t_1$ the SFDE with $B$ a Brownian motion s.t. $B(t)=0$ in $[-r,t_1]$ and $Z(t)=X(t)$, 
$$Z(t)=X_{t_1}(0)+\int_{t_1}^{t\vee t_1} H(Z_s,Z(s))ds+\int_{t_1}^{t\vee t_1} G(Z_s,Z(s))dB(s)$$
so that $\|\Theta_s\|_{(V\otimes V)^\ast}\to0$.\\ 
In this case it follows that the addends (v),(vi) and (vii) converge to 0.\\
A straightforward calculation for the terms (i), (ii), (iv), using the boundedness and Lipschitz continuity of the function $c$, leads to:
\begin{itemize}
\item for the term (i):
$$\lim_{t_2\searrow t_1}\mathbf{E}\Big[\frac{1}{t_2-t_1}\int_{t_1}^{t_2} c(X_s,X_s(0))u(s,X_s,X(s))e^{\int_{t_1}^{s}c(X_u,X_u(0))du}\|\mathcal{F}_{t_1}\Big]$$ $$= c(X_{t_1},X_{t_1}(0))u(t_1,X_{t_1},X(t_1));$$
\item for the term (ii):
$$\lim_{t_2\searrow t_1}\mathbf{E}\Big[\frac{1}{t_2-t_1}\int_{t_1}^{t_2} \frac{\partial u}{\partial s}(s,X_s,X(s))e^{\int_{t_1}^{s}c(X_u,X_u(0))du}ds\|\mathcal{F}_{t_1}\Big]$$ $$= \frac{\partial u}{\partial t}(t_1,X_{t_1},X(t_1));$$
\item for the term (iv):
\begin{align*}
\lim_{t_2\searrow t_1}\mathbf{E}\Big[&\frac{1}{t_2-t_1}\int_{t_1}^{t_2} \frac{\partial u}{\partial x}(s,X_s,S(s))e^{\int_{t_1}^{s}c(X_u,X_u(0))du}dX(s)\|\mathcal{F}_{t_1}\Big]\\
&=\lim_{t_2\searrow t_1}\mathbf{E}\Big[\frac{1}{t_2-t_1}\int_{t_1}^{t_2} \frac{\partial u}{\partial x}(s,X_s,S(s))e^{\int_{t_1}^{s}c(X_u,X_u(0))du}\\
&\quad\quad\quad\quad\quad\quad \cdot H(X_s,X_u(s))ds\|\mathcal{F}_{t_1}\Big]+0\\
&=H(X_{t_1},X(t_1))\frac{\partial u}{\partial x}(t_1,X_{t_1},X(t_1))
\end{align*} 
\item We concentrate our attention to the third term (iii).
\end{itemize}
This term contains the integral with respect to the \emph{segment process} (we refer to the Appendix for its definition and properties).\\
We have to check that for the function $$g(t):=\int_{t_0}^{t}\langle\frac{\partial u}{\partial\eta}(s,X_s,X(s)e^{\int_{0}^{t}c(X_u,X_u(0))du},dX_s\rangle_V$$ holds:
$$\lim_{t_2\searrow t_1}\mathbf{E}\Big[\frac{g(t_2)-g(t_1)}{t_2-t_1}\|\mathcal{F}_{t_1}\Big]=\langle\frac{\partial u}{\partial\eta}(t_1,X_{t_1},X_{t_1}(0),dX_{t_1}\rangle_V\cdot e^{\int_{t_0}^{t_1}c(X_u,X_u(0))du}$$
By taking into account that $c$ is Lipschitz continuous and bounded, by the stochastic Fubini's theorem (Lemma 4.2 in \cite{YM05}) and the definition of segment integral, we compute the following limit:
$$
\lim_{t_2\searrow t_1}\mathbf{E}\Big[\frac{g(t_2)-g(t_1)}{t_2-t_1}\|\mathcal{F}_{t_1}\Big]=(\ast)$$
$$\lim_{t_2\searrow t_1}\mathbf{E}\Big[\int_r^0\int_{t_1}^{t_2}e^{\int_{t_1}^{t_2}c(X_u,X_u(0))du}\frac{\partial u}{\partial\eta}(s,X_s,X(s))(\alpha)dX(\alpha+s)d\alpha\Big\|\mathcal{F}_{t_1}\Big]e^{\int_{t_0}^{t_1}c(X_u,X_u(0))du}
$$
\normalsize
We omit in what follows the factor $e^{\int_{t_0}^{t_1}c(X_u,X_u(0))du}$.
\begin{align*}
(\ast)&=\lim_{t_2\searrow t_1}\mathbf{E}\Big[\int_r^0\frac{1}{t_2-t_1}\int_{t_1}^{t_2}e^{\int_{t_1}^{s}c(X_u,X_u(0))du}\\
&\quad\quad\quad\quad\quad\quad\cdot\frac{\partial u}{\partial\eta}(s,X_s,X(s))(\alpha)\mathbf{1}_{\{s+\alpha\geq t_1\}}dX(\alpha+s)d\alpha\Big\|\mathcal{F}_{t_1}\Big]\\
&+\lim_{t_2\searrow t_1}\mathbf{E}\Big[\int_r^0\frac{1}{t_2-t_1}\int_{t_1}^{t_2}e^{\int_{t_1}^{s}c(X_u,X_u(0))du}\\
&\quad\quad\quad\quad\quad\quad\cdot\frac{\partial u}{\partial\eta}(s,X_s,X(s))(\alpha)\mathbf{1}_{\{s+\alpha> t_1\}}H(\alpha+s)dsd\alpha\Big\|\mathcal{F}_{t_1}\Big]
\end{align*}
\begin{align*}
&+\lim_{t_2\searrow t_1}\mathbf{E}\Big[\int_r^0\frac{1}{t_2-t_1}\int_{t_1}^{t_2}e^{\int_{s}^{t_2}c(X_u,X_u(0))du}\\
&\quad\quad\quad\quad\quad\quad\cdot\frac{\partial u}{\partial\eta}(s,X_s,X(s))(\alpha)\mathbf{1}_{\{s+\alpha> t_1\}}G(s+\alpha)dW(\alpha+s)d\alpha\Big\|\mathcal{F}_{t_1}\Big]\\
&=\langle\frac{\partial u}{\partial\eta}(t_1,X_{t_1},X_{t_1}(0),dX_{t_1}\rangle_V=Su(t_1,X_{t_1},X_{t_1}(0)
\end{align*} 
since the second and third integral are 0.
\normalsize
From hypothesis $$\Big(\frac{\partial}{\partial t}+\mathcal{A}_w+c\cdot\mathbb{I}\Big)u( t, \eta,\eta_{t_1}(0))=0$$ we can conclude that on $\mathcal{F}_{t_0}$
$$\lim_{t_2\searrow t_1}\frac{q(t_2)-q(t_1)}{t_2-t_1}=$$
$$=\mathbf{E}\Big\{\mathbf{E}\Big[\Big(\frac{\partial}{\partial t}+\mathcal{A}_w+c\cdot\mathbb{I}\Big)u( t_1, X_{t_1},X_{t_1}(0))\|\mathcal{F}_{t_1}\Big]e^{\int_{t_0}^{t_1} c( X_s,  X_s(0))ds}\|\mathcal{F}_{t_0}\Big\}\equiv 0.$$
Thus the function $q$ is continuous and has continuous right derivatives. By a well-known Lemma (\cite{Yosida95}, p. 239), $q$ is differentiable and hence a constant. We conclude that
\begin{align*}
q(t_0)=q(T)&=\mathbf{E}\Big[u( T, X_T,X(T))e^{\int_{t_0}^T c( X_s,  X_s(0))ds}\|\mathcal{F}_{t_0}\Big]\\
&=\mathbf{E}\Big[f( X_T,X(T))e^{\int_{t_0}^T c( X_s,  X_s(0))ds}\|\mathcal{F}_{t_0}\Big]
\end{align*}

\end{proof}

This theorem can generalize to the case where $c$ is time in-homogenous by using the same approach:
\begin{cor}Suppose $f\in\mathcal{D}(\mathcal{A}_w)$ and $c:[-r,T]\times L^2([-r,0])\times\Real^d\to\Real_+$ bounded and (\emph{maybe} Lipschitz) continuous: if $u$ solves weakly
$$\frac{\partial}{\partial t}u(t)+\mathcal{A}_w(u(t))+c(t,\cdot)\cdot u(t)=0$$
$$u(T,\eta,x)=f(\eta,x),$$
with $\mathcal{A}_w$ as defined in Theorem \ref{Mohammedgenerator}, then 
$$u(t,\eta,x):=\mathbb{E}_{(t,\eta,x)}\Big[f( ^\eta X_T,  ^\eta X_T(0))e^{\int_t^T c(s, ^\eta X_s,  ^\eta X_s(0))ds}\Big]$$
\end{cor}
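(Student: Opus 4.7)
The plan is to imitate the proof of Theorem \ref{FeynmannkacStef} verbatim, carrying the extra time variable of $c$ as a passive parameter throughout. I would fix $0 \leq t_0 < T$ and define, for $t \in [t_0, T]$,
\[
q(t) := \mathbf{E}\Big[u(t, X_t, X_t(0))\, e^{\int_{t_0}^t c(s, X_s, X_s(0))\,ds} \Big|\, \mathcal{F}_{t_0}\Big],
\]
and then use the Markov property of $(X_t, X(t))$ together with the tower property to reduce the right-derivative of $q$ at an arbitrary $t_1 \in [t_0, T)$ to the inner conditional expectation driven by the auxiliary process $Y(t) := \exp\!\big(\int_{t_1}^t c(s, X_s, X_s(0))\,ds\big)$.

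Next I would apply the It\^o-Mohammed-Yan formula (Theorem \ref{ItoFormula}) to the product $h(t, X_t, X_t(0), Y(t)) := u(t, X_t, X(t))\, Y(t)$. The crucial structural observation is that $Y$ still satisfies purely absolutely continuous dynamics $dY(t) = c(t, X_t, X_t(0))\, Y(t)\, dt$, with no martingale contribution and $D_s Y(\alpha) = 0$ unaffected by the new time-dependence. Consequently the It\^o expansion decomposes into exactly the same eight terms (i)--(viii) as in the proof of Theorem \ref{FeynmannkacStef}, with the sole modification that the autonomous $c(X_s, X_s(0))$ is replaced by $c(s, X_s, X_s(0))$ in term (i) and in every exponential factor.

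Passing to the limit $t_2 \searrow t_1$ is then carried out term-by-term exactly as before: joint continuity and boundedness of $c$ suffice to push the limit through the conditional expectation in terms (i), (ii) and (iv); the segment-integral contribution (iii) reduces to $S u(t_1, X_{t_1}, X_{t_1}(0))$ via the same stochastic Fubini argument and the Brownian extension $B$ with $B \equiv 0$ on $[-r, t_1]$; and terms (v)--(vii) vanish because $\|\Theta_s\|_{(V \otimes V)^\ast} \to 0$. Summing these contributions yields
\[
\lim_{t_2 \searrow t_1} \frac{q(t_2) - q(t_1)}{t_2 - t_1} = \mathbf{E}\Big[\big(\partial_t u + \mathcal{A}_w u + c(t_1, \cdot, \cdot)\, u\big)(t_1, X_{t_1}, X_{t_1}(0))\, e^{\int_{t_0}^{t_1} c(s, X_s, X_s(0))\,ds} \Big|\, \mathcal{F}_{t_0}\Big],
\]
which vanishes by the Cauchy problem satisfied by $u$. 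Continuity of $q$ together with identically zero right-derivative then force $q$ to be constant by the Yosida lemma \cite{Yosida95}, and the identity $q(t_0) = q(T)$ is exactly the claimed representation.

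The step I would scrutinise most carefully --- and the reason for the parenthetical remark \emph{``maybe Lipschitz''} in the statement --- is whether mere joint continuity of $c$ in $t$ really suffices for the dominated-convergence arguments underlying the term-by-term limits, in particular for the segment-integral term (iii), where the original proof implicitly used a Lipschitz estimate on $c$ to control the $\alpha$-integrand uniformly near $t_1$. Provided $c$ is bounded and jointly continuous, pathwise continuity of $s \mapsto (X_s, X_s(0))$ together with bounded convergence should remove the need for any Lipschitz constant of $c$ in $s$; verifying this point (and inspecting the original estimate of (iii) to confirm no hidden use of the Lipschitz constant of $c$) is the only genuine departure from the autonomous case, and the rest of the argument is a transparent rewriting.
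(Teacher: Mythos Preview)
Your proposal is correct and follows exactly the approach the paper indicates: the paper gives no separate proof for this corollary beyond the single sentence ``This theorem can generalize to the case where $c$ is time in-homogenous by using the same approach,'' and your write-up faithfully carries out that program, including the observation that the auxiliary process $Y$ retains purely absolutely continuous dynamics so the It\^{o}--Mohammed--Yan expansion is unchanged. Your closing discussion of whether Lipschitz continuity of $c$ in $t$ is genuinely needed (versus mere bounded joint continuity) is a useful refinement that the paper leaves open with its parenthetical ``\emph{maybe} Lipschitz.''
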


In what follows, we assume that \textbf{Assumption A} is satisfied. Consider the following SFDE:\\
Set $\eta_0:[-r,t]\to\Real^d$, with $\eta_0(s)=\eta_0(0)$ for $s\geq 0$ and $t^{t,\eta}_s=t-s$. 
If $s\in[-r,t]$ 
\begin{equation*}
X^{t,\eta}(s)=\eta_0(s)+\int_0^{s\vee0} H(t^{t,\eta}_s,X^{t,\eta}_u,X^{t,\eta}(u))ds+\int_0^{s\vee0} G(t^{t,\eta}_s,X^{t,\eta}_u,X^{t,\eta}(u))dW(u).
\end{equation*}
It is possible to define in accordance with the previous section a Markov family $$\Big(t^{t,\eta}_s, ^\eta X_s, ^\eta X_s(0)\Big)\in\Real\times L^2([-r,0],\Real^d)\times\Real^d$$
In this case the infinitesimal generator is given by $\mathcal{A}^-_w$, defined as
\begin{align*}
\widetilde{\mathcal{A}}^-_w\Phi(t,\phi)=& -\frac{\partial }{\partial t}\Phi(t,\phi_t)+S(\Phi)(t,\phi_t)+\overline{D\Phi(t,\phi_t}(H(t,\phi_t)\mathbf{1}_{\{0\}})\\
&+\frac{1}{2}\sum_{j=1}^m\overline{D^2\Phi(t,\phi_t}(G(t,\phi_t)(\mathbf{e}_j)\mathbf{1}_{\{0\}},G(t,\phi_t)(\mathbf{e}_j)\mathbf{1}_{\{0\}})
\end{align*}
If the coefficients are homogenous in time, then we have the following result: 
\begin{cor}Suppose $f\in\mathcal{D}(\mathcal{A}^-_w)$. If $u$ solves weakly
$$\frac{\partial}{\partial t}u(t)=\widetilde{\mathcal{A}}^-_w(u(t))$$
$$u(0,\eta,x)=f(\eta,x)$$
then
$$u(t,\eta,x)=\mathbb{E}_{(0,\eta,x)}\Big[f( ^\eta X_t,  ^\eta X_t(0))\Big]$$
\end{cor}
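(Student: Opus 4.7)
The plan is to reduce this statement to the previously-established Theorem \ref{FeynmannkacStef} by a time-reversal substitution, exploiting the time-homogeneity of the coefficients. Because $H$ and $G$ no longer depend explicitly on $t$, the Markov family $(^\eta X_s, {}^\eta X_s(0))$ is homogeneous, and the operator $\widetilde{\mathcal{A}}^-_w$ differs from the generator $\mathcal{A}_w$ of Theorem \ref{Mohammedgenerator} precisely in the sign of the time-derivative contribution — this reflects that the augmented Markov family in the backward parametrization travels time at rate $-1$, since $t^{t,\eta}_s = t - s$.

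Concretely, I would fix an arbitrary horizon $T > 0$ and set
$$v(s,\eta,x) := u(T - s, \eta, x),\qquad s \in [0,T].$$
A direct computation using the chain rule and the definition of $\widetilde{\mathcal{A}}^-_w$ shows that the hypothesis $\partial_t u = \widetilde{\mathcal{A}}^-_w u$ translates into
$$\frac{\partial}{\partial s} v(s, \eta, x) + \mathcal{A}_w v(s,\eta,x) = 0,$$
with terminal datum $v(T, \eta, x) = u(0, \eta, x) = f(\eta, x)$. The key point is that the $-\partial_t$ piece inside $\widetilde{\mathcal{A}}^-_w$, when evaluated at $u(T-s,\cdot)$, produces a $+\partial_s$ on $v$, so that all the spatial pieces ($S$, the drift term in $H$, and the diffusion term in $G$) assemble into the standard forward generator $\mathcal{A}_w$ acting on the time-homogeneous coefficients.

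Once $v$ is seen to satisfy the hypotheses of Theorem \ref{FeynmannkacStef} with potential $c \equiv 0$, the conclusion of that theorem gives
$$v(s, \eta, x) = \mathbb{E}_{(s,\eta,x)}\bigl[f(^\eta X_T, {}^\eta X_T(0))\bigr].$$
By time-homogeneity of the SFDE (\ref{C08-2.28}), the right-hand side depends on $s$ and $T$ only through the difference $T - s$, so it equals $\mathbb{E}_{(0,\eta,x)}[f(^\eta X_{T-s}, {}^\eta X_{T-s}(0))]$. Setting $s = T - t$ then yields $u(t, \eta, x) = \mathbb{E}_{(0,\eta,x)}[f(^\eta X_t, {}^\eta X_t(0))]$, which is the desired representation.

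The main obstacle is the bookkeeping in the second paragraph: one must carefully unpack the definition of $\widetilde{\mathcal{A}}^-_w$ and verify that the regularity requirements of Theorem \ref{Mohammedgenerator} (membership in $\mathcal{D}(\mathcal{A}_w)$, boundedness of the first two Fréchet derivatives, Lipschitzness of $D^2$) are inherited by $v(s,\cdot) = u(T-s,\cdot)$ from the assumption $f \in \mathcal{D}(\widetilde{\mathcal{A}}^-_w)$ and the weak solvability of the PDE. Granted this, no new stochastic analysis is needed beyond what was done for Theorem \ref{FeynmannkacStef}; the statement is a corollary purely through time-reversal and time-homogeneity. Alternatively, one could mimic the proof of Theorem \ref{FeynmannkacStef} by applying the It\^o formula of Theorem \ref{ItoFormula} directly to the process $s \mapsto u(t-s, X_s, X_s(0))$ and checking that the resulting expectation is constant in $s$ on $[0,t]$, but this essentially duplicates the earlier argument and adds nothing new.
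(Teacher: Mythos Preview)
Your proposal is correct and follows essentially the same route as the paper: fix an arbitrary horizon $T$, set $v(s,\eta,x)=u(T-s,\eta,x)$, and reduce to the machinery of Theorem~\ref{FeynmannkacStef} with $c\equiv 0$. The only cosmetic difference is that the paper re-runs the $q(t)$-constancy argument directly on $v$ starting at time $0$ (invoking ``exactly as before''), whereas you cite Theorem~\ref{FeynmannkacStef} as a black box and then use time-homogeneity to shift $\mathbb{E}_{(s,\eta,x)}$ to $\mathbb{E}_{(0,\eta,x)}$; both lead to the same conclusion.
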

\begin{proof} Let $u(t,\eta,x)$ be a solution of $\frac{\partial}{\partial t}u(t)=\mathcal{A}_w(u(t))$, $u(0,\eta,x)=f(\eta,x)$. Consider now, for a fixed but arbitrary $T$, the function $v(t,\eta,x):=u(T-t,\eta,x)$. Let us consider now the random variable
$$q(t):=\mathbb{E}[v(t,X_t,X(t))\|\mathcal{F}_0]$$
exactly as before $\frac{d}{dt^+}q(t)=0$ on $\mathcal{F}_0$ so one has the equalities:
\begin{align*}
q(0)=q(T)&=\mathbf{E}\Big[v( T, X_T,X(T))\|\mathcal{F}_{0}\Big]\\
&=\mathbf{E}\Big[v(0, X_0,X(0))\|\mathcal{F}_{0}\Big]\\
&=\mathbf{E}\Big[u(0, X_T,X(T))\|\mathcal{F}_{0}\Big]\\
&=\mathbf{E}\Big[f( X_T,X(T))\|\mathcal{F}_{0}\Big]=u(t,\eta,x)
\end{align*}
\end{proof}

\subsection{Viscosity Solution}

To state the reverse result we need to introduce the concept of a viscosity solution.

\begin{defn}
Let $V\in\mathbf{C}([0, T]×\mathbf{C})$. We say that $V$ is a viscosity sub-solution
of 
$$\frac{\partial}{\partial t}u(t)+\mathcal{A}_w(u(t))+c\cdot u(t)=0$$
$$u(T,\eta,x)=f(\eta,x)$$
with $\mathcal{A}_w$ as defined in Theorem \ref{Mohammedgenerator}, 
if, for every $\Gamma \in\mathbf{C}^{1,2}_{lip} ([0, T],\mathbb{C})\cap\mathcal{D}(S)$, and for $(t,\psi,)\in [0, T]\times\mathbf{C}$
satisfying $\Gamma\geq V$ on $[0, T]\times \mathbb{C}$ and $\Gamma(t,\psi) = V(t,\psi)$, we have
$$\frac{\partial}{\partial t}\Gamma(t)-SV+\Big[H(\Gamma(t))\cdot\nabla_x+\frac{1}{2}tr(\langle G, D^2(\cdot) G\rangle)\Big](\Gamma(t))\leq0$$
It is a supersolution if the analogous condition is met: $\Gamma\leq V$ on $[0, T] \times\mathbb{C}$, $\Gamma(t,\psi) = V(t,\psi)$ and we have
$$\frac{\partial}{\partial t}\Gamma(t)-SV+\Big[H(\Gamma(t))\cdot\nabla_x+\frac{1}{2}tr(\langle G, D^2(\cdot) G\rangle)\Big](\Gamma(t))\geq 0$$
A function $V$ is called a viscosity solution if it is simultaneously a sub-solution and a super-solution. 
\end{defn}
We are ready to state the reverse of Theorem \ref{FeynmannkacStef}.
\begin{thm}Suppose $f\in\mathcal{D}(\mathcal{A}_w)$ and $c:L^2([-r,0])\times\Real^d\to\Real_+$ bounded and Lipschitz continuous. The function
$$u(t,\eta,x):=\mathbb{E}_{(t,\eta,x)}\Big[f( ^\eta X_T,  ^\eta X_T(0))e^{-\int_t^T c( ^\eta X_s,  ^\eta X_s(0))ds}\Big]$$
is a viscosity solution of
$$\frac{\partial}{\partial t}u(t)+\mathcal{A}_w(u(t))-c\cdot u(t)=0$$
$$u(T,\eta,x)=f(\eta,x)$$
\end{thm}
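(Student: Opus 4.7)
The strategy is the classical one for viscosity characterizations: establish regularity of $u$, prove a dynamic programming principle (DPP), and then combine the DPP with It\^o--Mohammed--Yan on a smooth test function. First, I would show $u\in\mathbf{C}([0,T]\times\mathbf{C})$. Under Assumption A the Mohammed flow $(t,\eta,x)\mapsto(\,^\eta X_s,\,^\eta X_s(0))$ is $L^2$-continuous in the initial datum, uniformly on compact time intervals, and $f\in\mathcal{D}(\mathcal{A}_w)$ together with the bounded Lipschitz weight $c$ makes the integrand uniformly integrable. Continuity then follows from dominated convergence applied to the representation.

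Second, I would prove the backward DPP: for every stopping time $\tau$ with $t\leq\tau\leq T$,
\begin{equation*}
u(t,\eta,x)=\mathbb{E}_{(t,\eta,x)}\Big[u(\tau,\,^\eta X_\tau,\,^\eta X_\tau(0))\,e^{-\int_t^\tau c(^\eta X_s,\,^\eta X_s(0))\,ds}\Big].
\end{equation*}
This is a direct consequence of the Markov property of the pair $(^\eta X_s,\,^\eta X_s(0))$ recalled in Section~2 and the tower property, after splitting the exponential at $\tau$. For the sub-solution inequality, fix a test function $\Gamma\in\mathbf{C}^{1,2}_{\mathrm{lip}}([0,T],\mathbb{C})\cap\mathcal{D}(S)$ with $\Gamma\geq u$ on $[0,T]\times\mathbb{C}$ and $\Gamma(t,\psi)=u(t,\psi)$. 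For small $h>0$, use the DPP and the touching property to write
\begin{equation*}
0\geq\mathbb{E}_{(t,\psi)}\!\left[\Gamma(t+h, X_{t+h},X(t+h))e^{-\int_t^{t+h}c\,ds}-\Gamma(t,X_t,X(t))\right].
\end{equation*}
Apply Theorem~\ref{ItoFormula} to $\Gamma(s,X_s,X(s))\,e^{-\int_t^s c\,du}$ between $t$ and $t+h$, exactly as in the eight-term expansion (i)--(viii) in the proof of Theorem~\ref{FeynmannkacStef}. The stochastic integrals are martingales (using the adaptedness from Assumption A), the anticipating cross-terms (v)--(vii) vanish in the $h\downarrow0$ limit as shown there, and the segment integral (iii) produces $S$ applied to the relevant function. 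Dividing by $h$ and letting $h\downarrow0$ yields
\begin{equation*}
\tfrac{\partial}{\partial t}\Gamma(t,\psi)+\mathcal{A}_w\Gamma(t,\psi)-c(\psi)\,\Gamma(t,\psi)\leq 0,
\end{equation*}
which, after identifying the shift contribution with $SV$ at the touching point (see remark below), is the required sub-solution inequality. The super-solution inequality is proved by reversing inequalities with $\Gamma\leq u$.

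The main obstacle is the hybrid nature of the shift term in the definition: the test-function replacement is performed on the classical derivatives $\partial_t$, $\nabla_x$, $D^2$ but the operator $S$ is applied to $V$ itself. The issue is that $V=u$ is only continuous a priori, so $SV(t,\psi)$ need not exist pointwise. I would circumvent this by arguing that at the touching point $(t,\psi)$ the inequality $\Gamma\geq V$ with equality at $(t,\psi)$ forces $SV(t,\psi)$ to exist whenever $S\Gamma(t,\psi)$ does and $SV(t,\psi)\leq S\Gamma(t,\psi)$, because $S$ is a directional (shift) derivative and hence preserves the one-sided inequality; an identical argument in the super-solution case gives the matching bound, so the term $SV$ in the definition can consistently be bounded by $S\Gamma$ during the passage to the limit. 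The remaining technical points---justifying the Fubini exchange for the segment integral and the vanishing of the anticipating cross-terms under the $\mathbb{L}^{1,2}$ hypotheses on $H,G$---are handled exactly as in Theorem~\ref{FeynmannkacStef}, so the only genuinely new ingredient is the DPP together with this shift-comparison argument.
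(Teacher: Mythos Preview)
Your proposal is correct and follows essentially the same route as the paper: a dynamic programming identity (the paper states it as a separate lemma for deterministic intermediate times, proved via the Markov and tower properties exactly as you describe), followed by the touching argument with a test function $\Gamma$, an application of the Dynkin/It\^o--Mohammed--Yan identity to $e^{-\int c}\,\Gamma$, division by $h$ and passage to the limit. The only cosmetic difference is that the paper invokes Mohammed's Dynkin formula directly (citing Theorem~3.1 in \cite{MohammedSFDE}) to obtain the integrated identity
\[
\mathbb{E}_{\eta^x}\!\Big[e^{-\int_t^{t_1} c}\,\Gamma(t_1,X_{t_1},X_{t_1}(0))\Big]-\Gamma(t,\eta,x)=\mathbb{E}_{\eta^x}\!\Big[\int_t^{t_1}e^{-\int_t^{u} c}\big(\partial_t\Gamma+\mathcal{A}_w\Gamma-c\,\Gamma\big)(u)\,du\Big],
\]
whereas you propose to re-derive this by expanding along the eight terms (i)--(viii) as in Theorem~\ref{FeynmannkacStef}; the content is the same. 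Your discussion of the $SV$ versus $S\Gamma$ issue is in fact more careful than the paper, which writes $SV$ in the conclusion without comment even though the computation naturally produces $S\Gamma$.
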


In order to prove this result we need first a lemma that emphasizes the concept of markovianity of the solution of the SFDE $(X(t),X_t)$.

\begin{lem}
For $s,t\in [0,T]$ with $t\leq s$, we have
$$u(t,\eta,x):=\mathbb{E}_{(t,\eta,x)}\Big[u( ^\eta X_s,  ^\eta X_s(0))e^{-\int_t^s c( ^\eta X_s,  ^\eta X_s(0))ds} \Big]$$
\end{lem}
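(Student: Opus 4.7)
My plan is to recognize the lemma as the flow (semigroup) property for the Feynman--Kac functional and to derive it from the tower property of conditional expectation together with the Markov property of the segment process recalled in Section~2. Starting from the defining formula
$$u(t,\eta,x)=\mathbb{E}_{(t,\eta,x)}\Big[f({}^\eta X_T,{}^\eta X_T(0))\,e^{-\int_t^T c({}^\eta X_r,{}^\eta X_r(0))\,dr}\Big],$$
the first step is to split the exponent via the additivity $\int_t^T=\int_t^s+\int_s^T$, producing a product of the $\mathcal{F}_s$-measurable factor $e^{-\int_t^s c(\cdot)\,dr}$ and a tail factor $f({}^\eta X_T,{}^\eta X_T(0))\,e^{-\int_s^T c(\cdot)\,dr}$ that depends only on the trajectory beyond time $s$.

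Next I would condition on $\mathcal{F}_s$ inside the outer expectation, pull the $\mathcal{F}_s$-measurable factor out of the inner conditional expectation, and invoke the Markov property of the family $\{({}^\eta X_r,{}^\eta X_r(0))\}$ taking values in $\Real^d\times L^2([-r,0],\Real^d)$ as stated in Section~2. Because the coefficients of equation (\ref{C08-2.28}) are time-homogeneous under \textbf{Assumption A}, the solution restarted at time $s$ from the initial segment $({}^\eta X_s,{}^\eta X_s(0))$ coincides in law, conditional on $\mathcal{F}_s$, with a fresh solution of the SFDE launched from that random segment. This is exactly the identification needed to recognize the inner conditional expectation as the deterministic function $u(s,\cdot,\cdot)$ evaluated at $({}^\eta X_s,{}^\eta X_s(0))$, and substituting back yields the claimed identity.

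The main obstacle will be the Markov reduction in infinite dimensions: verifying that
$$\mathbb{E}\Big[f({}^\eta X_T,{}^\eta X_T(0))\,e^{-\int_s^T c(\cdot)\,dr}\,\Big|\,\mathcal{F}_s\Big]$$
really depends on $\mathcal{F}_s$ only through the pair $({}^\eta X_s,{}^\eta X_s(0))$, and is not sensitive to finer information about the past of the driving Brownian motion. This relies on the strong uniqueness supplied by Theorem~\ref{Mohammedgenerator}'s ambient existence result (Theorem~2.1) applied pathwise, on the autonomous character of $H$ and $G$ in this section, and on the transition probability kernel introduced in Section~2. Boundedness and Lipschitz continuity of $c$ together with $f\in\mathcal{D}(\mathcal{A}_w)$ supply the integrability required to legitimize the tower property and any Fubini exchange carried out along the way.
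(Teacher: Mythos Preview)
Your proposal is correct and follows essentially the same route as the paper: both arguments split the discount factor as $e^{-\int_t^s}\cdot e^{-\int_s^T}$, invoke the Markov property of the pair $({}^\eta X_r,{}^\eta X_r(0))$ from Section~2 to reduce conditioning on $\mathcal{F}_s$ to conditioning on $({}^\eta X_s,{}^\eta X_s(0))$, and then apply the tower property to collapse the nested expectations. The only cosmetic difference is the order of presentation---the paper first writes $u(s,X_s,X_s(0))$ as a conditional expectation and then takes the outer expectation, whereas you start from $u(t,\eta,x)$ and insert the conditioning---but these are the same chain of equalities read from opposite ends.
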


\begin{proof}
Let $s,t\in [0,T]$ such that $t\leq s$. Then the following equality holds:
\begin{align*}
u(s, X_s, X_s(0))=&\mathbb{E}\Big[f( X_T,  X_T(0))e^{-\int_u^T c( ^\eta X_s,  ^\eta X_s(0))ds}\Big\| X_u,X_u(0)\Big]\\
=&\mathbb{E}\Big[f( X_T,  X_T(0))e^{-\int_u^T c( ^\eta X_s,  ^\eta X_s(0))ds}\Big\| \mathcal{F}_u\Big]
\end{align*}
Since $( ^\eta X_u, ^\eta X_u(0))$ is Markovian, it follows from the Tower Property of the Conditional Expectation
\begin{align*}\mathbb{E}_{(t,\eta,x)}\Big[u( ^\eta X_s,  ^\eta X_s(0))e^{-\int_t^u c( ^\eta X_s,  ^\eta X_s(0))ds} \Big]&=\\
\mathbb{E}_{(t,\eta,x)}\Big[\mathbb{E}\Big[f( X_T,  X_T(0))e^{-\int_u^T c( X_s, X_s(0))ds}\Big\| \mathcal{F}_u\Big]e^{-\int_t^u c( ^\eta X_s,  ^\eta X_s(0))ds} \Big]&=\\
\mathbb{E}\Big[e^{-\int_u^T c( X_s, X_s(0))ds}e^{-\int_t^u c(X_s, X_s(0))ds}\mathbb{E}\Big[f( X_T,  X_T(0))\Big\| \mathcal{F}_u\Big]\Big\|\mathcal{F}_t \Big]&=\\
\mathbb{E}\Big[f( X_T,  X_T(0))e^{-\int_t^T c( X_s, X_s(0))ds}\Big\|\mathcal{F}_t\Big]&=u(t,\eta,x).
\end{align*}
\end{proof}

\begin{proof}[Proof of the Theorem]
We will be using the notation
$$\mathbb{E}_{\eta^x}[\cdot]:=\mathbb{E}[\cdot\|X_t=\eta, X_t(0)=x]$$
Let $\Gamma\in C^{1,2}_{lip}$ in the domain of the shift operator. For $0\leq t\leq t_1\leq T$, following Theorem 3.1 in \cite{MohammedSFDE}, we have that
$$\mathbb{E}_{\eta^x}\Big[e^{-\int_t^{t_1} c( X_s, X_s(0))ds}\Gamma(t_1,X_{t_1}, X_{t_1}(0))\Big]-\Gamma(t,\eta,x)$$
$$=\mathbb{E}_{\eta^x}\Big[\int_t^{t_1}e^{-\int_t^{u} c( X_s, X_s(0))ds}\Big(\frac{\partial}{\partial t}\Gamma(u)+\mathcal{A}_w(\Gamma(u))-c\cdot \Gamma(u)\Big)\Big]$$
where we have used the notation $\Gamma(u)=\Gamma(u,X_u, X_u(0))$.\\
From the previous lemma, for any $t_1\in[t,T]$ 
$$u(t,\eta,x)\geq\mathbb{E}_{(t,\eta,x)}\Big[u( ^\eta X_s,  ^\eta X_s(0))e^{-\int_t^s c( ^\eta X_s,  ^\eta X_s(0))ds} \Big]$$
By using $\Gamma\geq u$ the previous formula leads to
\begin{align*}
0&\geq \mathbb{E}_{\eta^x}\Big[e^{-\int_t^{t_1} c( X_s, X_s(0))ds} u(t_1,X_{t_1}, X_{t_1}(0))\Big]-u(t,\eta,x)\\
&\geq\mathbb{E}_{\eta^x}\Big[e^{-\int_t^{t_1} c( X_s, X_s(0))ds}\Gamma(t_1,X_{t_1}, X_{t_1}(0))\Big]-u(t,\eta,x)\\
&\geq \mathbb{E}_{\eta^x}\Big[\int_t^{t_1}e^{-\int_t^{u} c( X_s, X_s(0))ds}\Big(\frac{\partial}{\partial t}\Gamma(u)+\mathcal{A}_w(\Gamma(u))-c\cdot \Gamma(u)\Big)\Big]
\end{align*}
By dividing by $(t_1-t)$ and letting $t_1$ towards $t$ in the previous inequality, follows
$$\frac{\partial}{\partial t}\Gamma(t)-SV+\Big[H(\Gamma(t))\cdot\overline{\nabla}_x+\frac{1}{2}tr(\langle G, \overline{\Delta}(\cdot) G\rangle)\Big](\Gamma(t))\geq 0$$
In a similar fashion the other inequality is obtained. For any $t_1\in[t,T]$ 
$$u(t,\eta,x)\leq\mathbb{E}_{(t,\eta,x)}\Big[u( ^\eta X_s,  ^\eta X_s(0))e^{-\int_t^s c( ^\eta X_s,  ^\eta X_s(0))ds} \Big]$$
Now set $\Gamma\geq u$, and thus
\begin{align*}
0&\leq \mathbb{E}_{\eta^x}\Big[e^{-\int_t^{t_1} c( X_s, X_s(0))ds} u(t_1,X_{t_1}, X_{t_1}(0))\Big]-u(t,\eta,x)\\
&\leq\mathbb{E}_{\eta^x}\Big[e^{-\int_t^{t_1} c( X_s, X_s(0))ds}\Gamma(t_1,X_{t_1}, X_{t_1}(0))\Big]-u(t,\eta,x)\\
&\leq \mathbb{E}_{\eta^x}\Big[\int_t^{t_1}e^{-\int_t^{u} c( X_s, X_s(0))ds}\Big(\frac{\partial}{\partial t}\Gamma(u)+\mathcal{A}_w(\Gamma(u))-c\cdot \Gamma(u)\Big)\Big]
\end{align*}
By dividying by $(t_1-t)$ and letting $t_1$ towards $t$ in the previous inequality, follows
$$\frac{\partial}{\partial t}\Gamma(t)-SV+\Big[H(\Gamma(t))\cdot\overline{\nabla}_x+\frac{1}{2}tr(\langle G, \overline{\Delta}(\cdot) G\rangle)\Big](\Gamma(t))\leq 0$$
And the conclusion of the theorem follows.
\end{proof}

The following result implies the uniqueness of the solution.

\begin{thm}{Comparison principle}. Assume that $V_1(t,c)$ and $V_2(t,c)$ are both
continuous with respect to the argument $(t,c)$ and are respectively viscosity sub-solution and
super-solution of the FPDE with at most a polynomial growth. Then
$$V_1(t,c)\leq V_2(t,c) \forall (t,c)\in[0,T]\times C[-r,0]$$
\end{thm}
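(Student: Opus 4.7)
The plan is to argue by contradiction via a doubling-of-variables argument adapted to the path-dependent setting, in which the state space $C[-r,0]$ is infinite-dimensional and the generator $\mathcal{A}_w$ contains the unbounded shift $S$. Suppose, for contradiction, that $M := \sup_{(t,c)\in[0,T]\times C[-r,0]}(V_1(t,c)-V_2(t,c)) > 0$. Because of the polynomial growth of $V_1, V_2$, I would first localize the supremum by subtracting a radial penalty $\lambda(1+\|c\|^2_{C[-r,0]})^p$ for $p$ large enough, and a linear time perturbation $\mu(T-t)$ with $\mu>0$ small so that the modified maximum is attained at some interior point $t<T$ (at $t=T$ both solutions coincide with $f$, ruling out a boundary contradiction). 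Then I would double variables via
\begin{equation*}
\Phi_\varepsilon(t,s,c,d) := V_1(t,c) - V_2(s,d) - \tfrac{|t-s|^2}{2\varepsilon} - \tfrac{\|c-d\|^2_{C[-r,0]}}{2\varepsilon} - \lambda\bigl(1+\|c\|^2\bigr)^p - \mu(T-t),
\end{equation*}
and let $(\hat t,\hat s,\hat c,\hat d)$ denote a (near-)maximizer. The standard penalization estimates then give $|\hat t-\hat s|^2/\varepsilon + \|\hat c-\hat d\|^2/\varepsilon \to 0$ and $\Phi_\varepsilon(\hat t,\hat s,\hat c,\hat d) \to M$ as $\varepsilon\to0$ followed by $\lambda,\mu\to0$.

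The smooth functions
\begin{equation*}
\Gamma_1(t,c) := V_2(\hat s,\hat d) + \tfrac{|t-\hat s|^2}{2\varepsilon} + \tfrac{\|c-\hat d\|^2}{2\varepsilon} + \lambda(1+\|c\|^2)^p + \mu(T-t)
\end{equation*}
and, symmetrically, $\Gamma_2(s,d) := V_1(\hat t,\hat c) - \tfrac{|\hat t-s|^2}{2\varepsilon} - \tfrac{\|\hat c-d\|^2}{2\varepsilon} - \lambda(1+\|\hat c\|^2)^p - \mu(T-\hat t)$ touch $V_1$ from above at $(\hat t,\hat c)$ and $V_2$ from below at $(\hat s,\hat d)$. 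After checking that both belong to the class $C^{1,2}_{lip}([0,T]\times C[-r,0])\cap\mathcal{D}(S)$ of admissible test functions, I would write the sub-solution inequality at $(\hat t,\hat c)$ against $\Gamma_1$ and the super-solution inequality at $(\hat s,\hat d)$ against $\Gamma_2$ and subtract. The time derivatives contribute a net $+\mu$; the first-order penalty gradients appear with opposite signs in the two inequalities and cancel up to a residual $\omega(|\hat t-\hat s|+\|\hat c-\hat d\|)$ coming from the Lipschitz continuity of $H$ and $G$ (Assumption A); the zeroth-order term produces $c(\hat c,\hat c(0))\cdot M + o(1)$, which is non-negative since $c\geq 0$. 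Letting $\varepsilon\to 0$ and then $\lambda\to 0$, I obtain $\mu \leq 0$, contradicting $\mu>0$.

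The main obstacle is the interplay between the infinite-dimensional path space and the unbounded shift operator. In $\RR^d$ the Crandall-Ishii lemma supplies the second-order jets needed to control the cross-term in the diffusion part, whereas in $C[-r,0]$ one must either invoke an infinite-dimensional variant (typically requiring a Gaussian-type auxiliary penalty to restore compactness of sublevel sets, as in Lions' Hilbert-space framework) or exploit the explicit structure of $\mathcal{D}(S)$ so that the shift contributions $S\Gamma_1(\hat t,\hat c)-S\Gamma_2(\hat s,\hat d)$ cancel up to an error vanishing with $\varepsilon$. Verifying that the polynomial-and-distance penalty used in $\Phi_\varepsilon$ truly lies in $\mathcal{D}(S)$, and that this cancellation is uniform as the doubling parameter vanishes, is the technical heart of the proof.
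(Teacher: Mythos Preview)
The paper does not actually prove this theorem: its entire proof consists of the sentence ``The proof follows the same argument as in \emph{Chang et al.} \cite{CPP06}.'' Your doubling-of-variables sketch is precisely the kind of argument that reference contains, so in spirit you are aligned with what the paper intends.

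That said, there is a genuine gap in your proposal that you only partially acknowledge. Your test functions $\Gamma_1,\Gamma_2$ are built from the penalty $\|c-\hat d\|_{C[-r,0]}^2$ and the localizer $(1+\|c\|_{C[-r,0]}^2)^p$, i.e.\ from the squared \emph{sup} norm on $C[-r,0]$. This functional is not Fr\'echet differentiable on $C[-r,0]$ (the supremum is attained on a set that can jump discontinuously), so $\Gamma_1,\Gamma_2$ do not lie in $\mathbf{C}^{1,2}_{\mathrm{lip}}([0,T]\times\mathbf{C})$, let alone in $\mathcal{D}(S)$, and hence are not admissible test functions in the definition of viscosity sub/super-solution used here. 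The Crandall--Ishii machinery you invoke is native to $\RR^d$ or to Hilbert spaces (Lions' theory), and in the latter one penalizes with the $L^2$ norm, which \emph{is} smooth; the comparison arguments in the SFDE literature (including \cite{CPP06}) rely on exactly this Hilbert structure or on carefully chosen smooth cylindrical/quasi-tame penalties compatible with the shift operator. Your final paragraph flags the $\mathcal{D}(S)$ issue but not the more basic failure of differentiability. To close the gap you must either (i) move the whole argument to the $L^2([-r,0],\RR^d)$ state space and use the Hilbert-norm penalty, or (ii) replace the sup-norm terms by a smooth surrogate (e.g.\ integral-type penalties) that is simultaneously $\mathbf{C}^{1,2}_{\mathrm{lip}}$ and in $\mathcal{D}(S)$, and then redo the cancellation estimates with that surrogate.
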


\begin{proof}
The proof follows the same argument as in \emph{Chang et al.} \cite{CPP06}.
\end{proof}

\section{The Feynman-Kac Formula - Boundary Value problem}

In this this section we develop the Feynman-Kac's formula for solution of SFDE constrain to a domain $D$.
Let us consider an open bounded domain $D$ of $\Real^d$ and the set of continuous functions $A=\mathbf{C}([-r,0],D)$ bounded uniformly by $M$. Let us consider the random time 
$$\tau^t_{\eta,x}:=\inf\{s\in[0,T]:( ^\eta X_s, ^\eta X(s))\in\partial (A\times D)\}\wedge t$$
and the stopped process
$$X^{\tau^t_{\eta,x}}(t)=\eta_0(t)+\int_0^{\tau^t_{\eta,x}\vee0} H(X_s,X(s))ds+\int_0^{\tau^t_{\eta,x}\vee0} G(X_s,X(s))dW(s)$$
where $\eta$ is defined as in the previous section and $H\in\mathbb{L}^{1,2}$ and $G\in\mathbb{L}^{1,2}$ are $\mathcal{F}_t$-adapted functions that satisfy the hypothesis of Lipschitz continuity with respect to both arguments that implies existence and uniqueness.\\\\
We underline that in this section we deal with the $C([-r,0],\Real^d)$ setting. In dealing with the infinitesimal generator we have to take some care respect to the $L^2$-setting\cite{MohammedSFDE}:\\
Let $\mathcal{L}(C)$ and $\mathcal{B}(C)$ be the space of bounded linear functionals $\Phi : C\to\Real$ and bounded bilinear functionals $\tilde\Phi : C\times C\to\Real$, of the space C, respectively. They are equipped with the operator norms which will be, respectively, denoted by $\norm{\cdot}_\mathcal{L}$ and $\norm{\cdot}_\mathcal{B}$. With $\mathbf{1}_{[a,b]}(t):=\mathbf{1}_{[-r,0]\cap[a,b]}(t)$
$$F_n:=\{v\mathbf{1}_{\{0\}}:v\in\Real^n\}$$
We form the direct sum 
$$C\oplus F_n:=\{\phi+v\mathbf{1}_{\{0\}} | \phi\in C,v\in\Real^n\}$$
and equip it with the norm $\norm{\cdot}$ defined by
$$\norm{\phi+v\mathbf{1}_{\{0\}}}:=\sup_{t\in[-r,0]}\phi(t)+|v|\quad \phi\in C,v\in\Real^n$$
Note that for each sufficiently smooth function $\Phi:C\to\Real$, its first order Fr\'echet derivative $D\Phi(\phi)\in\mathcal{L}(C)$ has a unique and continuous linear extension $\overline{D\Phi(\phi)}\in\mathcal{L}(C\oplus F_n)$. Similarly, its second order Fr\'echet derivative $D^2\Phi(\phi)\in\mathcal{B}(C)$ has a unique and continuous linear extension $\overline{D^2\Phi(\phi)}\in\mathcal{B}(C\oplus F_n)$.\\
For a Borel measurable function $\Phi:C\to\Real$, we also define the Shift Operator
$$\Gamma_t(\Phi)(\phi):=\Phi(\tilde{\phi_t}),$$
where for each $\phi\in C$ and $t\geq0$ $\tilde\phi:[-r,\infty)\to\Real^n$ is defined by
$$\tilde\phi(t):=
\begin{cases}
\phi(0)&t>0\\
\phi(t)&t\in[-r,0]
\end{cases}
$$
and define the operator
$$S(\Phi)(\phi):=\lim_{t\to0}\frac{1}{t}\Big[\Gamma_t(\Phi)(\phi)-\Phi(\phi)\Big]$$
whose domain $\mathcal{D}(S)$ is defined as the set of functions for which the limit exists.

\begin{thm}\label{Thm-InfinitesimalGenerator}
Let us suppose that $\Phi\in \mathbf{C}([0,T]\times \mathbf{C})$ satisfies the smoothness condition, $\Phi\in \mathbf{C}^{1,2}_{Lip}([0,T]\times \mathbf{C})$ and $\Phi\in\mathcal{D}(S)$. Let $\{X_s, s\in[t,T]\}$ be the $C$-valued Markov solution defined above with initial data $(t,\phi_t)\in[0,T]\times C$. Then
\begin{align*}
\mathcal{A}_w\Phi(t,\phi)=&\lim_{\epsilon\to0}\frac{\mathbb{E}[\Phi(t+\epsilon,X_{t+\epsilon})]-\Phi(t,\phi_t)}{\epsilon}\\
=& \frac{\partial }{\partial t}\Phi(t,\phi_t)+S(\Phi)(t,\phi_t)+\overline{D\Phi(t,\phi_t}(H(t,\phi_t)\mathbf{1}_{\{0\}})\\
&+\frac{1}{2}\sum_{j=1}^m\overline{D^2\Phi(t,\phi_t}(G(t,\phi_t)(\mathbf{e}_j)\mathbf{1}_{\{0\}},G(t,\phi_t)(\mathbf{e}_j)\mathbf{1}_{\{0\}})
\end{align*}
where $\mathbf{e}_j, j=1\ldots n$ is the $j-$th vector of the standard basis in $\Real^m$. 
\end{thm}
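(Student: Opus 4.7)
The plan is to decompose the increment $\Phi(t+\epsilon, X_{t+\epsilon}) - \Phi(t,\phi_t)$ into three natural pieces, each of which produces one group of terms in $\mathcal{A}_w\Phi$. Specifically, I would write
\begin{align*}
\Phi(t+\epsilon, X_{t+\epsilon}) - \Phi(t,\phi_t)
&= \bigl[\Phi(t+\epsilon, X_{t+\epsilon}) - \Phi(t+\epsilon, \tilde\phi_{t+\epsilon})\bigr] \\
&\quad + \bigl[\Phi(t+\epsilon, \tilde\phi_{t+\epsilon}) - \Phi(t, \tilde\phi_{t+\epsilon})\bigr] \\
&\quad + \bigl[\Phi(t, \tilde\phi_{t+\epsilon}) - \Phi(t,\phi_t)\bigr],
\end{align*}
where $\tilde\phi_{t+\epsilon}$ is the deterministic shift of $\phi$ obtained from freezing the path at $\phi(0)$ on $[t,t+\epsilon]$. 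After dividing by $\epsilon$, taking expectation, and sending $\epsilon\downarrow 0$, the second bracket yields $\partial_t\Phi(t,\phi_t)$ by the assumed $C^{1,2}_{Lip}$ regularity in time, while the third bracket yields exactly $S(\Phi)(t,\phi_t)$ by the definition of the shift operator (since $\Phi\in\mathcal{D}(S)$).

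The heart of the proof is the first bracket. Set
$$\Delta_{t,\epsilon}(s) := X_{t+\epsilon}(s) - \tilde\phi_{t+\epsilon}(s), \qquad s\in[-r,0].$$
Note that $\Delta_{t,\epsilon}$ vanishes on $[-r,-\epsilon]$, and on $[-\epsilon,0]$ it equals $X(t+\epsilon+s) - \phi(0)$; from the SFDE this is
$$\Delta_{t,\epsilon}(s) = \int_t^{t+\epsilon+s} H(X_u, X(u))\,du + \int_t^{t+\epsilon+s} G(X_u, X(u))\,dW(u),$$
which by continuity of $H,G$ and smoothness of the trajectory behaves, uniformly in $s\in[-\epsilon,0]$, like $H(\phi_t,\phi(0))\cdot(\epsilon+s) + G(\phi_t,\phi(0))(W(t+\epsilon+s)-W(t))$ plus an $o(\sqrt\epsilon)$ remainder in $L^2(\Omega, C)$. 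The key observation is that, although $\Delta_{t,\epsilon}\in C$, as $\epsilon\downarrow 0$ it is essentially concentrated at $s=0$, so its ``weak limit'' belongs to $C\oplus F_n$; this is precisely why the extensions $\overline{D\Phi}$ and $\overline{D^2\Phi}$ enter the statement.

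I would then apply the second order Taylor expansion
\begin{align*}
\Phi(t+\epsilon, X_{t+\epsilon}) - \Phi(t+\epsilon, \tilde\phi_{t+\epsilon})
&= \overline{D\Phi(t+\epsilon,\tilde\phi_{t+\epsilon})}(\Delta_{t,\epsilon}) \\
&\quad + \tfrac12 \overline{D^2\Phi(t+\epsilon,\tilde\phi_{t+\epsilon})}(\Delta_{t,\epsilon},\Delta_{t,\epsilon}) + R_\epsilon,
\end{align*}
where $|R_\epsilon|\le C \|\Delta_{t,\epsilon}\|^{2+\alpha}$ by the Lipschitz hypothesis on $D^2\Phi$. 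For the first order term, the Itô stochastic integral piece of $\Delta_{t,\epsilon}$ vanishes in expectation, leaving $\mathbb{E}[\overline{D\Phi}(\Delta_{t,\epsilon})] = \overline{D\Phi(t,\phi_t)}(H(t,\phi_t)\mathbf{1}_{\{0\}})\,\epsilon + o(\epsilon)$. For the second order term, the drift$\times$drift contribution is $O(\epsilon^2)$ and discarded; the drift$\times$diffusion cross term vanishes in expectation; and the diffusion$\times$diffusion contribution gives, by the Itô isometry applied componentwise through $G(t,\phi_t)\mathbf{e}_j$,
$$\mathbb{E}\bigl[\overline{D^2\Phi}(\Delta_{t,\epsilon},\Delta_{t,\epsilon})\bigr] = \sum_{j=1}^m \overline{D^2\Phi(t,\phi_t)}\bigl(G(t,\phi_t)\mathbf{e}_j\mathbf{1}_{\{0\}},\,G(t,\phi_t)\mathbf{e}_j\mathbf{1}_{\{0\}}\bigr)\,\epsilon + o(\epsilon).$$

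The main obstacle is the final passage to the limit: $\Delta_{t,\epsilon}$ does not converge in $C$ to any honest element, so one cannot simply plug a limiting argument into $\overline{D\Phi}$ and $\overline{D^2\Phi}$. The resolution is to compare $\overline{D\Phi(t+\epsilon,\tilde\phi_{t+\epsilon})}(\Delta_{t,\epsilon})$ to $\overline{D\Phi(t,\phi_t)}(\Delta_{t,\epsilon})$ using the joint Lipschitz property of $D\Phi$ (which gives an $o(\epsilon)$ error after taking expectation, since $\mathbb{E}\|\Delta_{t,\epsilon}\|=O(\sqrt\epsilon)$), and then to explicitly evaluate $\overline{D\Phi(t,\phi_t)}$ on the leading linear approximation $(H\,\epsilon + G\,\Delta W)\mathbf{1}_{\{0\}} + \text{small continuous piece}$; the small piece is controlled because $\overline{D\Phi}$ is bounded on $C\oplus F_n$ and the continuous piece has $L^1(\Omega;C)$ norm $o(\epsilon)$. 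The analogous but more delicate estimate for $\overline{D^2\Phi}$ uses its Hilbert–Schmidt-type continuity and the BDG inequality to control $\mathbb{E}\|\Delta_{t,\epsilon}\|^{2+\alpha}=o(\epsilon)$. Assembling the three brackets gives the stated formula for $\mathcal{A}_w\Phi(t,\phi)$.
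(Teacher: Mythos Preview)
The paper does not actually give a proof of this theorem: it is stated in Section~4 as a background result on the weak infinitesimal generator in the $C$-setting, analogous to Theorem~\ref{Mohammedgenerator} in the $L^2$-setting, and both are taken from Mohammed's theory of SFDE's (see \cite{MohammedSFDE,A97}). So there is nothing in the present paper to compare your argument against.

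That said, your three-bracket decomposition is exactly the standard approach used in Mohammed's proof of the generator formula. The identification of the second and third brackets with $\partial_t\Phi$ and $S(\Phi)$ is correct, and the Taylor expansion of the first bracket with the crucial observation that $\Delta_{t,\epsilon}$ is supported on $[-\epsilon,0]$ and ``collapses'' to a multiple of $\mathbf{1}_{\{0\}}$ in $C\oplus F_n$ is precisely the mechanism that forces the extended derivatives $\overline{D\Phi}$, $\overline{D^2\Phi}$ into the formula. Your handling of the first- and second-order terms (drift survives, It\^o integral vanishes in expectation, quadratic variation produces the $G\mathbf{e}_j$ sum) is correct in outline. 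The one place where your sketch is a bit loose is the control of the remainder $R_\epsilon$: the Lipschitz assumption on $D^2\Phi$ gives $|R_\epsilon|\le C\|\Delta_{t,\epsilon}\|^3$ (not $\|\cdot\|^{2+\alpha}$ for a free $\alpha$), and to get $\mathbb{E}\|\Delta_{t,\epsilon}\|_C^3=o(\epsilon)$ you need a moment bound on the sup of the stochastic integral over $[t,t+\epsilon]$, which is where BDG is genuinely used; this is routine but should be stated carefully. With that caveat, your outline matches the literature proof.
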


Let us confine ourself to the class of quasi-tame functions \cite{MohammedSFDE}.

\begin{defn}
A function $\phi:\mathbf{C}([-r,0],\Real^m)\to\Real$ is \emph{quasi-tame} if there is an integer $k>0$, $\mathbf{C}^\infty$ maps $f_j:\Real^m\to\Real^m$, $h:\Real^{n\times k}\to\Real$ and piece-wise $\mathbf{C}^1$ function $g_j:[-r,0]\to\Real$, with $1\geq j\geq k-1$, such that for all $\eta\in\mathbf{C}([-r,0],\Real^m)$ we have
$$\phi(\eta)=h\Big((\int_{-r}^0f_j(\eta(s))g_j(s)ds)_{j=1}^{k-1};\eta(0)\Big)$$
\end{defn}

\begin{thm}
Suppose $\psi\in L^2(\Omega,\mathbf{C})$ and the operator $\mathcal{A}_q$ defined in Theorem \ref{Thm-InfinitesimalGenerator} applied to the class of quasi-tame functions. Then the martingale problem for $(\mathcal{A}_q,\psi)$ is well posed. 
\end{thm}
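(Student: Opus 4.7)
The plan is to establish existence and uniqueness separately, leaning on SFDE solution theory for the former and on pathwise uniqueness for the latter.

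\textbf{Existence.} I would start from the unique strong solution $X$ of equation (\ref{Eq_1}) with initial segment $\psi$, which is furnished by the existence theorem in Section 2 under Assumptions 1 and 2 (or 1 and 3). Given a quasi-tame $\phi\in\mathcal{D}(\mathcal{A}_q)$ written as $\phi(\eta)=h\bigl((\int_{-r}^0 f_j(\eta(s))g_j(s)\,ds)_{j=1}^{k-1};\eta(0)\bigr)$, the smoothness of $h$ and $f_j$ together with the piecewise regularity of $g_j$ give $\phi$ enough Fr\'echet differentiability in the $\mathbf{C}$-topology to apply the It\^o-Mohammed-Yan formula (Theorem \ref{ItoFormula}) to $\phi(X_t)$. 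The drift terms assemble, by Theorem \ref{Thm-InfinitesimalGenerator}, into $\int_0^t \mathcal{A}_q\phi(X_s)\,ds$, while the surviving stochastic-integral terms form a local martingale which, by boundedness of $Dh$ and $D^2h$ on the relevant range together with the moment estimates on $X$, is genuinely a martingale. This shows that the law of $X$ solves the martingale problem for $(\mathcal{A}_q,\psi)$.

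\textbf{Uniqueness.} Let $P$ be any solution of the martingale problem. I would show that $P$ coincides with the law of a weak solution of (\ref{Eq_1}), after which weak uniqueness (deduced from pathwise uniqueness of the SFDE via Yamada-Watanabe) forces all solutions to coincide. Testing the martingale property against quasi-tame functions that depend only on the terminal value $\eta(0)$ (take $h(y_1,\ldots,y_{k-1};v)$ a function of $v$ alone, which is covered by the class by choosing $f_j\equiv 0$) recovers the drift and quadratic variation of the coordinate process $X(t)$ as $H(X_t,X(t))$ and $GG^{\top}(X_t,X(t))$ respectively; under $P$ this produces, via a standard martingale representation, a Brownian motion $W$ together with the identity $dX(t)=H(X_t,X(t))\,dt+G(X_t,X(t))\,dW(t)$. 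Hence $P$ is the law of a weak solution of (\ref{Eq_1}), and pathwise uniqueness fixes $P$ uniquely.

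\textbf{Main obstacle.} The delicate step is that the quasi-tame class must be large enough both to determine measures on $\mathbf{C}([-r,0],\Real^m)$ and to detect the full It\^o structure of the segment process; this is automatic from density in $\mathbf{C}_0$ in the finite-dimensional Stroock-Varadhan framework, but in the infinite-dimensional $\mathbf{C}$-setting it relies on the observation that weighted averages of the form $\int_{-r}^0 f(\eta(s))g(s)\,ds$ with smooth $f,g$, together with the evaluation at $0$, separate continuous paths, in the spirit of a Stone-Weierstrass argument. Carrying out the careful bookkeeping of the cross terms produced by the It\^o-Mohammed-Yan formula applied to the composite structure $h\circ(\int f_j g_j,\eta(0))$, and verifying that the quasi-tame class is stable under the operations needed to recover the coefficients $H$ and $GG^{\top}$, is where the bulk of the technical effort lies.
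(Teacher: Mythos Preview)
The paper does not actually prove this theorem: it is stated without proof, evidently imported as a known result from Mohammed's monograph \cite{MohammedSFDE}, where quasi-tame functions and the corresponding martingale problem are introduced and analyzed. So there is no in-paper argument to compare your proposal against.

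Your overall strategy is the natural one and matches the standard route in the SFDE literature: existence via the law of the strong solution together with a functional It\^{o} formula, and uniqueness by reconstructing a weak solution from the martingale problem and then invoking pathwise uniqueness through a Yamada--Watanabe argument. One technical simplification worth noting: for quasi-tame $\phi(\eta)=h\bigl((\int_{-r}^0 f_j(\eta(s))g_j(s)\,ds)_{j=1}^{k-1};\eta(0)\bigr)$ you do not need the full anticipating It\^{o}--Mohammed--Yan formula of Theorem \ref{ItoFormula}. Each coordinate $Y_j(t):=\int_{-r}^0 f_j(X(t+s))g_j(s)\,ds$ is, after an integration by parts in $s$, a finite-variation process in $t$, so $\phi(X_t)=h(Y_1(t),\ldots,Y_{k-1}(t),X(t))$ is a smooth function of finitely many semimartingale coordinates and the classical It\^{o} formula suffices. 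This is precisely the reason the quasi-tame class is introduced in \cite{MohammedSFDE}: it bypasses the segment calculus entirely. Your identification of the genuine obstacle---that the quasi-tame class must be rich enough to determine measures on $\mathbf{C}$ and to recover $H$ and $GG^\top$---is accurate, and in Mohammed's treatment this is addressed via a density argument for quasi-tame functions in $\mathbf{C}_b$.
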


\begin{lem}
Suppose $f\in\mathcal{D}(\mathcal{A}_w)$ and $\mathbb{E}[\tau_{x,\eta}^D]<\infty$. If $u$ solves classically
$$\mathcal{A}_w(u(x,\eta))-c(x,\eta)u(x,\eta)=f(x,t)\quad (\eta,x)\in A\times D$$
$$u(\eta,x)=g(\eta,x)\quad (\eta,x)\in\partial (A\times D)$$
where $g(\eta,x)$ belongs to the class of quasi tame functions, then
$$u(\eta,x)=-\mathbb{E}_{(0,\eta,x)}\Big[\int_0^{\tau_{x,\eta}^t}f( ^\eta X_{s},  ^\eta X(s))e^{-\int_0^{s}c( ^\eta X_{u},  ^\eta X(0))du}\Big]$$
$$+\mathbb{E}_{(0,\eta,x)}\Big[g( ^\eta X_{\tau^t_{\eta,x}},  ^\eta X(\tau^t_{\eta,x}))e^{-\int_0^{\tau^t_{\eta,x}}c( ^\eta X_{u},  ^\eta X(u))du}\Big]$$
\end{lem}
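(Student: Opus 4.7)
The plan is to apply the functional Itô formula (Theorem \ref{ItoFormula}) to the discounted process
$$M(s) := u\bigl(X_s, X(s)\bigr)\, Y(s), \qquad Y(s) := e^{-\int_0^{s} c(X_u, X_u(0))\, du},$$
and stop it at the exit time $\tau := \tau^t_{\eta,x}$. The argument parallels the proof of Theorem \ref{FeynmannkacStef}, except that the FPDE now carries a source term $f$ and the final value is supplied by the Dirichlet datum $g$ on $\partial(A\times D)$ rather than by a terminal condition at time $T$.

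First I would expand $dM$. Since $Y$ has bounded variation with $dY = -cY\,dt$ and is adapted with trivial Malliavin derivative, all anticipating cross terms in the Itô-Mohammed-Yan expansion involve only $u$. Grouping terms exactly as in the derivation of Theorem \ref{FeynmannkacStef} gives
$$dM(s) = Y(s)\bigl[(\mathcal{A}_w u - c\, u)(X_s, X(s))\bigr]\,ds + dN(s),$$
where $N$ collects the segment integral $\int \langle \partial_\eta u, dX_s\rangle$, the $\partial_x u\cdot G\,dW$ martingale, and the anticipating correction terms that vanish in conditional expectation under the Brownian restart trick used in Theorem \ref{FeynmannkacStef}.

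Next, I would substitute the hypothesis $\mathcal{A}_w u - c\,u = f$ valid on $A\times D$, integrate from $0$ to the localized time $\tau\wedge n$, and take $\mathbb{E}_{(0,\eta,x)}$, which kills $N$. Rearranging yields
$$u(\eta,x) = \mathbb{E}_{(0,\eta,x)}\bigl[Y(\tau\wedge n)\, u(X_{\tau\wedge n}, X(\tau\wedge n))\bigr] - \mathbb{E}_{(0,\eta,x)}\left[\int_0^{\tau\wedge n} Y(s)\, f(X_s, X(s))\, ds\right].$$
Letting $n\to\infty$ uses $\mathbb{E}[\tau]<\infty$, the bound $0<Y\leq 1$ (since $c\geq 0$), boundedness of $D$, and continuity of $u$ and $g$ on the closure of $A\times D$; dominated convergence then gives
$$u(\eta,x) = \mathbb{E}_{(0,\eta,x)}\bigl[Y(\tau)\, g(X_\tau, X(\tau))\bigr] - \mathbb{E}_{(0,\eta,x)}\left[\int_0^{\tau} Y(s)\, f(X_s, X(s))\, ds\right],$$
where the Dirichlet condition $u=g$ on $\partial(A\times D)$ was used at the hitting time. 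This is exactly the claimed formula.

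The principal difficulty will be the segment integral and the non-adapted cross terms in the Itô-Mohammed-Yan formula: these are handled as in the proof of Theorem \ref{FeynmannkacStef} by introducing the restarted SFDE driven by a Brownian motion that vanishes before the initial time, so that the Malliavin derivatives $D_sX$ vanish on the relevant intervals and the anticipating contributions drop out of the conditional expectation. A secondary subtlety is regularity: $u$ is only assumed to solve the FPDE classically and to lie in $\mathcal{D}(\mathcal{A}_w)$, so one may need to approximate $u$ within the class of quasi-tame functions (for which the martingale problem for $\mathcal{A}_q$ is well-posed by the preceding theorem) and pass to the limit in the representation.
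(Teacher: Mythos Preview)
Your outline is correct and would yield the stated representation, but it is more elaborate than what the paper does. The paper's proof is a one-line reference: it says to follow Freidlin's classical argument (Theorem~2.1, p.~127 in \cite{MF85}) while replacing the ordinary It\^{o} formula by the It\^{o} formula \emph{for quasi-tame functions}. The point of restricting to the quasi-tame class (and of the theorem on the martingale problem for $\mathcal{A}_q$ stated just before the lemma) is precisely that for such functionals the It\^{o} expansion is classical---no Malliavin derivatives, no anticipating integrals, no $\Theta_s$ or $(G\Lambda)_sX_s$ terms appear. One therefore gets directly
\[
d\bigl(u(X_s,X(s))Y(s)\bigr) = Y(s)\bigl[\mathcal{A}_w u - c\,u\bigr](X_s,X(s))\,ds + Y(s)\,\overline{Du}\,G\,dW(s),
\]
and the rest is the standard localization/optional-stopping argument you describe.

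By contrast, you invoke the full It\^{o}--Mohammed--Yan formula (Theorem~\ref{ItoFormula}) and then appeal to the Brownian-restart device from the proof of Theorem~\ref{FeynmannkacStef} to dispose of the anticipating correction terms. This works, but it imports machinery that the quasi-tame setting was designed to avoid, and your final remark about approximating $u$ by quasi-tame functions is then doing the same reduction after the fact rather than before. In short: same skeleton (discount, apply It\^{o}, stop, take expectation, pass to the limit), but the paper's choice of It\^{o} formula makes the ``principal difficulty'' you flag disappear at the outset.
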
 
\begin{proof}
 
The proof can be done following the proof in \cite{MF85}, Theorem 2.1 page 127, using the It\^{o} formula for quasi-tame functions.
\end{proof}
Similarly as in the previous section, let us suppose that the \textbf{Assumption A} is satisfied. Consider the following SFDE:

Set $\eta_0:[-r,t]\to\Real^d$, with $\eta_0(s)=\eta_0(0)$ for $s\geq 0$ and $t^{t,\eta}_s=t-s$. 

If $s\in[-r,t]$ 
\begin{equation*}
X^{t,\eta}(s)=\eta_0(s)+\int_0^{s\vee0} H(t^{t,\eta}_s,X^{t,\eta}_u,X^{t,\eta}(u))ds+\int_0^{s\vee0} G(t^{t,\eta}_s,X^{t,\eta}_u,X^{t,\eta}(u))dW(u).
\end{equation*}
It is possible to define in accordance with the previous section a Markov family $$\Big(t^{t,\eta}_s, ^\eta X_s, ^\eta X_s(0)\Big)\in\Real\times L^2([-r,0],\Real^d)\times\Real^d$$
In this case the infinitesimal generator is given by $\mathcal{A}^-_w$, defined as
\begin{align*}
\widetilde{\mathcal{A}}^-_w\Phi(t,\phi)=& -\frac{\partial }{\partial t}\Phi(t,\phi_t)+S(\Phi)(t,\phi_t)+\overline{D\Phi(t,\phi_t}(H(t,\phi_t)\mathbf{1}_{\{0\}})\\
&+\frac{1}{2}\sum_{j=1}^m\overline{D^2\Phi(t,\phi_t}(G(t,\phi_t)(\mathbf{e}_j)\mathbf{1}_{\{0\}},G(t,\phi_t)(\mathbf{e}_j)\mathbf{1}_{\{0\}})
\end{align*}

The following theorem holds:
\begin{thm}\label{thm31}Suppose $f\in\mathcal{D}(\mathcal{A}_w)$. If $u$ solves weakly
$$\frac{\partial}{\partial t}u(t)=\mathcal{A}_w(u(t))+c(x,\eta)u(x,\eta)\quad (t,\eta,x)\in[0,T]\times A\times D$$
$$u(0,\eta,x)=f(\eta,x)\quad (\eta,x)\in A\times D$$
$$u(t,\eta,x)=g(t,\eta,x)\quad (\eta,x)\in\partial (A\times D)$$
then
\begin{align*}
u(t,\eta,x)=&\,\mathbb{E}_{(0,\eta,x)}\Big[f( ^\eta X_t,  ^\eta X_t(0))\mathbf{1}_{\{\tau^t_{\eta,x}=t\}}e^{-\int_0^{t}c( ^\eta X_{u},  ^\eta X(u))du}\Big]\\
&+\mathbb{E}_{(0,\eta,x)}\Big[g(\tau^t_{\eta,x}, ^\eta X_t,  ^\eta X_t(0))\mathbf{1}_{\{\tau^t_{\eta,x}\not= t\}}e^{-\int_0^{\tau^t_{\eta,x}}c( ^\eta X_{u},  ^\eta X(u))du}\Big]
\end{align*}
\end{thm}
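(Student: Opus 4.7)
The strategy is to combine the time-reversal device used in the Corollary following Theorem \ref{FeynmannkacStef} with the stopping-time argument of the preceding Lemma. Fix an arbitrary $t \in [0,T]$ and define $v(s, \eta, x) := u(t-s, \eta, x)$ for $s \in [0, t]$. Since $u$ solves the forward equation $\partial_t u = \mathcal{A}_w u + c u$, the function $v$ solves the backward equation
$$\frac{\partial v}{\partial s} + \mathcal{A}_w v + c \cdot v = 0 \quad \text{on } [0,t] \times A \times D,$$
with terminal data $v(t, \eta, x) = f(\eta, x)$ on $A \times D$ and boundary data $v(s, \eta, x) = g(t-s, \eta, x)$ on $[0,t] \times \partial(A \times D)$. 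This places us in the framework of Theorem \ref{FeynmannkacStef}, with the additional constraint enforced by stopping at $\tau^t_{\eta,x}$.

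Next, I would introduce the auxiliary process $Y(s) := \exp\bigl(-\int_0^s c(X_u, X_u(0))\,du\bigr)$, which satisfies $dY(s) = -c(X_s, X_s(0)) Y(s)\,ds$ and is adapted. Applying the It\^o--Mohammed--Yan formula (Theorem \ref{ItoFormula}) to $h(s) := v(s, X_s, X(s)) \cdot Y(s)$ on the random interval $[0, \tau^t_{\eta,x}]$ mirrors the term-by-term calculation used in the proof of Theorem \ref{FeynmannkacStef}. The eight It\^o terms organize into (a) a sum that reconstructs $\partial_s v + \mathcal{A}_w v + c\cdot v$ applied to the integrand and therefore vanishes by the PDE, and (b) a stochastic-integral remainder that is a local martingale. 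Boundedness of $c$ together with the Lipschitz and growth conditions on $H, G$ guaranteed by \textbf{Assumption A} upgrades this local martingale to a genuine martingale, and optional sampling at the bounded stopping time $\tau^t_{\eta,x}$ yields
$$v(0, \eta, x) = \mathbb{E}_{(0,\eta,x)}\bigl[v(\tau^t_{\eta,x}, X_{\tau^t_{\eta,x}}, X(\tau^t_{\eta,x}))\, Y(\tau^t_{\eta,x})\bigr].$$

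To conclude, I would decompose the expectation on the two disjoint events $\{\tau^t_{\eta,x} = t\}$ and $\{\tau^t_{\eta,x} < t\}$. On the first event the trajectory has remained in $A \times D$ throughout $[0,t]$, so $v(\tau, X_\tau, X(\tau)) = v(t, X_t, X(t)) = f(X_t, X_t(0))$ by the terminal data for $v$ (equivalently, the initial condition for $u$). On the second event the process exits at a boundary point, where $v(\tau, X_\tau, X(\tau)) = g(t-\tau, X_\tau, X_\tau(0))$ by the boundary condition, up to the indexing convention used in the statement. Substituting $v(0,\eta,x) = u(t,\eta,x)$ then produces the two terms of the claimed representation.

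The principal obstacle is regularity: Theorem \ref{ItoFormula} requires $v \in \mathbf{C}^{1,2}_b$ with bounded second derivative, whereas $u$ is assumed to solve the PDE only weakly. The preceding Lemma circumvented this by restricting to the class of quasi-tame functions, for which the segment It\^o calculus is developed in \cite{MohammedSFDE}, and I would adopt the same device here: first establish the identity for quasi-tame data, then extend by density using the well-posedness of the martingale problem for $\mathcal{A}_q$ and the comparison principle stated in the preceding subsection. A secondary subtlety is to control the segment-integral term $(iii)$ at the exit time; as in the proof of Theorem \ref{FeynmannkacStef}, one reduces to an auxiliary Brownian extension that forces $\|\Theta_s\|_{(V\otimes V)^\ast}\to 0$ and guarantees that the non-adapted cross terms vanish in the conditional expectation, so that no hidden boundary contribution is lost.
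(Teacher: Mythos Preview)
The paper does not actually supply a proof of Theorem~\ref{thm31}; it states the result immediately after introducing the time-reversed Markov family $(t^{t,\eta}_s,\,^\eta X_s,\,^\eta X_s(0))$ and the generator $\widetilde{\mathcal{A}}^-_w$, and moves on. Your proposal is therefore not competing against an existing argument but filling in the gap, and the route you take---time-reverse $u$ to obtain a backward equation, apply the It\^o--Mohammed--Yan formula to $v\cdot Y$ on the stopped interval, then split on $\{\tau^t_{\eta,x}=t\}$ versus $\{\tau^t_{\eta,x}<t\}$---is exactly the combination of the Corollary after Theorem~\ref{FeynmannkacStef} with the preceding Lemma that the paper's layout invites.

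One point to tidy: the paper reverses time in the \emph{process} (via $t^{t,\eta}_s=t-s$ and the generator $\widetilde{\mathcal{A}}^-_w$), whereas you reverse time in the \emph{function} (via $v(s,\cdot)=u(t-s,\cdot)$). These are dual devices and either works, but you should be explicit that you are running the original forward process $X$ from time $0$ and not the auxiliary reversed family the paper just introduced; otherwise the reader may try to match your $v$ against $\widetilde{\mathcal{A}}^-_w$ and find a spurious sign clash. Relatedly, your boundary term comes out as $g(t-\tau,\cdot)$ while the statement writes $g(\tau^t_{\eta,x},\cdot)$; this is precisely the discrepancy between the two time conventions, so flag it rather than leaving it as ``up to the indexing convention.'' The regularity workaround via quasi-tame functions is the right call and mirrors the Lemma, though the density-plus-comparison extension you sketch at the end is more than the paper itself claims and would need its own justification.
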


The reverse holds in case of viscosity solutions of the FPDE.

\begin{thm}
\begin{align*}
u(t,\eta,x)=&\,\mathbb{E}_{(0,\eta,x)}\Big[f( ^\eta X_t,  ^\eta X_t(0))\mathbf{1}_{\{\tau^t_{\eta,x}=t\}}e^{-\int_0^{t}c( ^\eta X_{u},  ^\eta X(u))du}\Big]\\
&+\mathbb{E}_{(0,\eta,x)}\Big[g(\tau^t_{\eta,x}, ^\eta X_t,  ^\eta X_t(0))\mathbf{1}_{\{\tau^t_{\eta,x}\not= t\}}e^{-\int_0^{\tau^t_{\eta,x}}c( ^\eta X_{u},  ^\eta X(u))du}\Big]
\end{align*} is a viscosity solution of the system
$$\frac{\partial}{\partial t}u(t)=\mathcal{A}_w(u(t))+c(x,\eta)u(x,\eta)\quad (t,\eta,x)\in[0,T]\times A\times D$$
$$u(0,\eta,x)=f(\eta,x)\quad (\eta,x)\in A\times D$$
$$u(t,\eta,x)=g(t,\eta,x)\quad (\eta,x)\in\partial (A\times D)$$
\end{thm}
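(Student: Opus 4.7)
The plan is to mirror the proof of the earlier converse viscosity theorem in Section 3, with the only structural novelty being the presence of the stopping time $\tau^{t}_{\eta,x}$ and the indicators $\mathbf{1}_{\{\tau=t\}},\mathbf{1}_{\{\tau\neq t\}}$. Because $u$ is a bounded functional of the same Markov family $({}^\eta X_s,{}^\eta X_s(0))$ used there, and because $c$, $f$, $g$ are bounded and Lipschitz, Theorems \ref{Mohammedgenerator} and \ref{Thm-InfinitesimalGenerator} together with the Mohammed identity apply to test functions in the interior of $[0,T]\times A\times D$ without modification. The boundary/initial data are read off directly from the formula: for $(\eta,x)\in\partial(A\times D)$ one has $\tau^{t}_{\eta,x}=0$ almost surely, so the first summand vanishes and the second collapses to the boundary value of $g$; for $t=0$ and interior $(\eta,x)$ the indicator $\mathbf{1}_{\{\tau^{0}_{\eta,x}=0\}}$ selects the first summand and returns $f(\eta,x)$. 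Joint continuity of $u$ then follows from continuous dependence of the stopped segment flow on the starting datum together with dominated convergence.

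The key technical step is a dynamic programming lemma adapted to the stopped flow, analogous to the Markov-type lemma that precedes the converse of Theorem \ref{FeynmannkacStef}: for every interior $(\eta,x)\in A\times D$ and every $0\leq s\leq t$,
\begin{equation*}
u(t,\eta,x)=\mathbb{E}_{(0,\eta,x)}\Big[u\bigl(t-s\wedge\tau^{t}_{\eta,x},{}^\eta X_{s\wedge\tau^{t}_{\eta,x}},{}^\eta X(s\wedge\tau^{t}_{\eta,x})\bigr)\,e^{-\int_{0}^{s\wedge\tau^{t}_{\eta,x}}c({}^\eta X_u,{}^\eta X(u))\,du}\Big].
\end{equation*}
This identity is proved by splitting on $\{\tau^{t}_{\eta,x}<s\}$, on which the stopped process already lives on $\partial(A\times D)$ so that the stopping time of the restarted flow is zero and the integrand collapses to the $g$-contribution, and on $\{\tau^{t}_{\eta,x}\geq s\}$, on which the strong Markov property at $s$ recovers the defining formula of $u$ at remaining time $t-s$; the two pieces are glued by the tower property.

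Given the DPP, the interior viscosity argument runs exactly as in the converse of Theorem \ref{FeynmannkacStef}. Fix a test function $\Gamma\in C^{1,2}_{\mathrm{lip}}\cap\mathcal{D}(S)$ with $\Gamma\geq u$ and $\Gamma(t,\eta,x)=u(t,\eta,x)$ at an interior point $(t,\eta,x)\in[0,T)\times A\times D$. Applying Theorem \ref{Thm-InfinitesimalGenerator} to $\Gamma$ between $0$ and $s\wedge\tau^{t}_{\eta,x}$ and combining with the DPP and $\Gamma\geq u$ yields
\begin{equation*}
0\geq\mathbb{E}_{(0,\eta,x)}\Big[\int_{0}^{s\wedge\tau^{t}_{\eta,x}}e^{-\int_0^{u}c\,dv}\Bigl(-\partial_t\Gamma+\mathcal{A}_w\Gamma-c\,\Gamma\Bigr)du\Big].
\end{equation*}
Dividing by $s$ and sending $s\downarrow 0$ produces the sub-solution inequality at $(t,\eta,x)$; reversing the touching condition ($\Gamma\leq u$) gives the super-solution inequality. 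Uniqueness, and hence identification with the representation of Theorem \ref{thm31}, then follows from the comparison principle stated immediately afterwards in Section 3.

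The main obstacle will be justifying the limit $s\downarrow 0$ under the expectation: the integrand contains the shift-operator term $S\Gamma$, which is only continuous, so one must invoke dominated convergence together with the uniform Lipschitz bounds built into $C^{1,2}_{\mathrm{lip}}$. The interior assumption ensures $\mathbb{P}(\tau^{t}_{\eta,x}>s)\to 1$ as $s\downarrow 0$, so $s\wedge\tau^{t}_{\eta,x}=s$ outside a set of probability $o(1)$ whose contribution to the $(1/s)$-normalised integrand is negligible in the limit. A secondary subtlety is the behaviour of test functions near $\partial(A\times D)$, where one argues as in the standard viscosity treatment of stopped diffusions.
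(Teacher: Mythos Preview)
The paper does not supply a proof for this theorem; it is stated immediately after Theorem~\ref{thm31} and the text moves directly on to Section~5. Your proposal is therefore not being compared to an existing argument but to the implicit strategy the paper suggests, namely ``repeat the converse viscosity proof of Section~3 with the obvious modifications for the stopped process.'' In that sense your outline is exactly what the author would have written: the dynamic-programming identity you state is the stopped analogue of the Markov lemma preceding the Section~3 converse, and the sub/super-solution inequalities are obtained by the same divide-by-$s$ limit applied to the Mohammed generator identity for a test function $\Gamma$.

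Two points worth tightening. First, the identity you invoke ``applying Theorem~\ref{Thm-InfinitesimalGenerator} to $\Gamma$ between $0$ and $s\wedge\tau$'' is really a Dynkin-type formula up to a bounded stopping time; Theorem~\ref{Thm-InfinitesimalGenerator} only gives the pointwise generator. In the paper's framework the correct justification is the well-posed martingale problem for quasi-tame functions (the theorem immediately following Definition~4.2), which makes $\Gamma(s,X_s)-\int_0^s\mathcal{A}_w\Gamma$ a martingale and hence legitimises optional stopping at $s\wedge\tau$. Second, the time orientation here is forward ($u(0,\cdot)=f$, equation $\partial_t u=\mathcal{A}_w u+cu$), whereas the Section~3 template is backward; this is why the paper introduces the reversed family and $\widetilde{\mathcal{A}}^{-}_w$ just before Theorem~\ref{thm31}. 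Your DPP and the sign in the displayed inequality should be written so that the test function is evaluated at the \emph{remaining} time $t-s$, which you do in the DPP but should carry through consistently to the generator inequality. With those two adjustments the argument is sound and is precisely the intended extension.
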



\section{First Exit Time Probability for SFDE}


Let us denote by $\tau_D$ the first exit time of $X^{x,\eta}(t)$, where $X^{x,\eta}(t)$ is the solution of the SFDE with initial conditions $x,\eta$. Let $\mathbf{Q}(t,x,\eta)$ be the probability that $X^{x,\eta}$ starting from $x,\eta$ did not exit the domain $D\subset\Real^d\times C([-r,0],\Real^d)$ before $t$, i.e.
$$\mathbf{Q}(t,\eta,x)=1-\mathbf{P}_{x,\eta}\Big(\tau_A<t\Big)$$

\subsection{First Exit Time probability as a Viscosity Solution }

Let us consider the process solution of the SFDE:
$$X(t)=\eta_0(t)+\int_0^{t\vee0} H(X_s,X(s))ds+\int_0^{t\vee0} G(X_s,X(s))dW(s).$$

Throughout this section we impose the following stringent hypothesis about the FPDE:
\begin{equation}\label{Prob_exit}\begin{cases}
\frac{\partial}{\partial t}u(t)=\mathcal{A}_w(u(t))& (t,\eta,x)\in[0,T]\times D\\
u(0,\eta,x)=1& (\eta,x)\in D\\
u(t,\eta,x)=0& (t,\eta,x)\in]0,T[\times\partial D
\end{cases}\end{equation}

\textbf{Hypothesis B}: the variational problem (\ref{Prob_exit}) belongs to 
$$u\in \mathbf{C}^{0}\Big([0,T];\mathbf{C}^2([0,T],D)\cap\mathbf{C}^2(\overline{D})\Big).$$

It is then possible to state the following result:
\begin{thm}
Under \textbf{Hypothesis B}, the function
$$\mathbf{Q}(t,\eta,x)=1-\mathbf{P}_{x,\eta}\Big(\tau_A<t\Big)$$
is a Viscosity solution of the problem:
\begin{equation*}\begin{cases}
\frac{\partial}{\partial t}u(t)=\mathcal{A}_w(u(t))& (t,\eta,x)\in[0,T]\times D\\
u(0,\eta,x)=1& (\eta,x)\in D\\
u(t,\eta,x)=0& (t,\eta,x)\in]0,T[\times\partial D
\end{cases}\end{equation*}
\end{thm}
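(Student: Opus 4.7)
The plan is to recognize the statement as a direct specialization of the viscosity version of the boundary-value Feynman--Kac theorem stated just above (the unnamed reverse of Theorem~\ref{thm31}). Starting from the very definition of the stopped exit time
\[
\tau^t_{\eta,x}:=\inf\{s\in[0,T] : (^\eta X_s,\,{}^\eta X(s))\in\partial(A\times D)\}\wedge t,
\]
one has $\{\tau^t_{\eta,x}=t\}=\{\tau_D\geq t\}$ and $\{\tau^t_{\eta,x}\neq t\}=\{\tau_D<t\}$, so that
\[
\mathbb{E}_{(0,\eta,x)}\bigl[\mathbf{1}_{\{\tau^t_{\eta,x}=t\}}\bigr]
=\mathbf{P}_{x,\eta}(\tau_D\geq t)=1-\mathbf{P}_{x,\eta}(\tau_D<t)=\mathbf{Q}(t,\eta,x).
\]
Thus $\mathbf{Q}$ is precisely the function produced by the representation formula of the preceding viscosity theorem when one specializes to $f\equiv 1$, $g\equiv 0$ and $c\equiv 0$.

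Next I would verify that these trivial data fall within the scope of the cited viscosity theorem. The constant function $1$ trivially belongs to $\mathcal{D}(\mathcal{A}_w)$ since $\mathcal{A}_w 1=0$; the boundary datum $g\equiv 0$ is quasi-tame; and $c\equiv 0$ is bounded and Lipschitz. Under Hypothesis~B the underlying variational problem is well-posed in the required regularity class, so one may invoke the viscosity theorem to conclude that $(t,\eta,x)\mapsto\mathbb{E}_{(0,\eta,x)}[\mathbf{1}_{\{\tau^t_{\eta,x}=t\}}]$ is a viscosity solution of
\[
\partial_t u=\mathcal{A}_w u\quad\text{on }[0,T]\times D,
\]
with $u(0,\cdot)=1$ on $D$ and $u=0$ on $(0,T)\times\partial D$. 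The initial condition is immediate from $\mathbf{P}_{x,\eta}(\tau_D<0)=0$ for $(\eta,x)\in D$, while the boundary condition follows from the fact that a process started at $(\eta,x)\in\partial D$ has $\tau_D=0$ almost surely, hence $\mathbf{Q}(t,\eta,x)=0$ for every $t>0$. By the identification above, $\mathbf{Q}$ inherits the viscosity-solution property.

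The main obstacle I anticipate is the discontinuity of the data at the corner $\{0\}\times\partial D$: the initial condition equals $1$ while the boundary condition equals $0$, which is characteristic of first-exit problems and would be a serious impediment for a classical formulation. In the viscosity formulation one tests against smooth functions $\Gamma\in\mathbf{C}^{1,2}_{lip}\cap\mathcal{D}(S)$ that touch $\mathbf{Q}$ from above or below at an interior point $(t,\psi)$, and the sub/super-solution inequalities are obtained by applying It\^o's formula of Theorem~\ref{ItoFormula} to $\Gamma(t-s,X_s,X_s(0))$ stopped at $\tau_D\wedge s$, combined with the dynamic programming identity
\[
\mathbf{Q}(t,\eta,x)=\mathbb{E}_{x,\eta}\bigl[\mathbf{1}_{\{\tau_D\geq\varepsilon\}}\mathbf{Q}(t-\varepsilon,X_\varepsilon,X_\varepsilon(0))\bigr]
\]
which is a direct consequence of the Markov property of the pair $(X_s,X(s))$. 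Dividing by $\varepsilon$ and letting $\varepsilon\downarrow 0$ yields the desired viscosity inequalities at $(t,\psi)$, mirroring the argument used for the interior viscosity theorem in Section~3. Hypothesis~B is precisely what allows the corner singularity to be absorbed into the viscosity framework without destroying well-posedness.
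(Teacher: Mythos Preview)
The paper does not include an explicit proof of this theorem; it is stated at the end of Section~5 and the text moves directly to applications. Your approach---specializing the boundary-value viscosity theorem (the reverse of Theorem~\ref{thm31}) with $f\equiv 1$, $g\equiv 0$, $c\equiv 0$ and identifying the resulting expectation with $\mathbf{Q}$ via $\{\tau^t_{\eta,x}=t\}=\{\tau_D\geq t\}$---is correct and is clearly the derivation the paper intends, given that Section~4 is set up precisely so that Section~5 follows as a corollary. Your handling of the corner discontinuity and the role of Hypothesis~B is also appropriate; there is nothing further in the paper to compare against.
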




\section{Applications}

\subsection{Movement of E.coli}

The motion of \emph{E. coli} bacteria is characterized by a sequence of run and tumble events \cite{Berg}. During a run the flagella of the bacteria rotate counter-clockwise, form a bundle and propel the cell in a more or less straight line. If the flagella rotate clockwise, the bundle opens and the bacteria randomly change their angle of motion without forward propagation (tumble). This bacterium belongs to the species that due to their small size are unable to sense chemoattractant gradients reliably. The evolution has then developed a history-dependent strategy for search of food, namely on the use of the memory of previous measurements of chemical concentrations. In this way the bacterium is able to infer whether the swimming is done up or down a chemical gradient.

\subsubsection{Tumble Probability}
Let us suppose that at time $t=0$, the bacterium is at position $x_0$. The bacterium is characterized by an internal-state variable, say $\Lambda$, which modulates the \emph{turning probability}: given a level of saturation $\tau_0$, the bacterium tumbles when the process $\Lambda$ hits the level $\tau_0$. The process $\Lambda(t)$ satisfies the following system of SFDE's:
let $0\leq t\leq T,$
\begin{equation*}
\begin{aligned}
\zeta (t)&=F(c(X (t),t),\theta(t),\dot{\zeta}(t)),\\
\Lambda (t)&=\Lambda _0+\int_0^t\lambda(s,\zeta (s),\zeta _s,\Lambda _s,\theta (s))ds+\int_0^t\sigma(s,\zeta (s),\zeta _s,\Lambda _s,\theta (s))dW (s).\\
\end{aligned}
\end{equation*}
where $c(x,t)$ is the concentration of attractant at $(x,t)$, $\theta(t)$ is the direction along which the bacterium swims (it is constant between two jumps), and $X(t)$ is the position, which satisfies 
$$X(t)=X(\tau_1)+(t-\tau_1)\theta,\quad t\in[\tau_1,\tau_2).$$
For $t\in[-r,0]$ we assume the initial processes:
$$(x_0(t))_{t\in[-r,0]},\quad(\theta(t))_{t\in[-r,0]},\quad(\Lambda_0(t))_{t\in[-r,0]},\quad(\zeta_0(t))_{t\in[-r,0]}.$$

The function 
$$\mathbf{Q}(t,\eta,x)=1-\mathbf{P}_{x,\eta}\Big(\tau_A<t\Big)$$
is a the distribution for the length of a \emph{run}:
\begin{equation*}\begin{cases}
\frac{\partial}{\partial t}u(t)=\mathcal{A}_w(u(t))& (t,\eta,x)\in[0,T]\times D\\
u(0,\eta,x)=1& (\eta,x)\in D\\
u(t,\eta,x)=0& (t,\eta,x)\in]0,T[\times\partial D
\end{cases}\end{equation*}

%
%

\subsection{Infinite-Dimensional Black-Scholes Equation with Hereditary Structure}
The price of options in the continuous time $(B,S)$-market has been a subject of extended research in recent years. Let consider a slight modification of the model proposed in \cite{CY}. 
The idealized Black-Scholes $(B,S)$-market often consists of an account $(B(t))_{t\in[0,T]}$ and the stock $(S(t))_{t\in[0,T]}$. The equation for the evolution of the prices of these two financial products are given by the following system of SFDE's: Let us suppose that the solution process $B(\phi)$ satisfies the following equality
$$B(t)=\phi(0)e^{\int_0^tr(s)ds}$$
where $(\phi(t))_{t\in[-r,0]}$ is the initial condition. 
Let $T>0$ be the expiration time for the European options considered in this example. Assume that the stock price $(S(t))_{t\in[-r,T]}$ satisfies the following nonlinear stochastic functional differential equation:
$$\frac{dS(t)}{S(t)}=f(S_t)dt+g(S_t)dW(t),\quad t\in[0,T]$$
with initial price function $\psi$.
Using classical arguments for Trading Strategy and Equivalent Martingale Measure, the pricing formula $V:[0,T]\times C[-r,0]\to\Real$ satisfies the following expression:
$$V(t,\psi)=\mathbb{E}^t_{\psi}\Big[e^{-\int_t^Tr(s)ds}\lambda(S_T)\Big]$$
\begin{thm}
Assume that $V(t,\psi)\in\mathbf{C}^{1,2}_{\textrm{Lip}}([0,T]\times \mathbf{C})\cup\mathcal{D}(S)$ and that the market is \emph{self-financial}, then $V(t,\psi)$ satisfies the following equation:
\begin{align*}
r(t)V(t,\psi)=&\frac{\partial }{\partial t}V(t,\psi)+S(V)(t,\psi)+\overline{DV(t,\psi)}(r(t)\psi(0)\mathbf{1}_{0})\\
&+\overline{D^2V(t,\psi)}(\psi(0)g(\psi)\mathbf{1}_{0},\psi(0)g(\psi)\mathbf{1}_{0})\quad (t,\psi)\in[0,T)\times \mathbf{C}\\
V(T,\psi)=&\lambda(\psi)\quad\quad  \psi\in\mathbf{C}
\end{align*}
  And the reverse holds in the sense of a viscosity solution.
\end{thm}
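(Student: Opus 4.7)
The plan is to reduce the statement to a direct application of the Feynman-Kac formula established in Section 3, specifically Theorem \ref{FeynmannkacStef} and the time-inhomogeneous corollary that follows it. The two main ingredients are (i) a classical no-arbitrage argument that rewrites the pricing expectation in the exact form required by Feynman-Kac, and (ii) the identification of the explicit infinitesimal generator $\mathcal{A}_w$ from Theorem \ref{Mohammedgenerator} with the spatial differential operator displayed in the statement.

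First I would invoke the self-financing hypothesis together with standard no-arbitrage reasoning to construct an equivalent martingale measure $\mathbb{Q}$ under which $e^{-\int_0^t r(s)ds}S(t)$ is a local martingale. By a functional Girsanov change of drift, the stock SFDE becomes
$$\frac{dS(t)}{S(t)} = r(t)\,dt + g(S_t)\,dW^{\mathbb{Q}}(t),$$
so the relevant coefficients for applying Feynman-Kac are $H(t,\psi) := r(t)\psi(0)\mathbf{1}_{\{0\}}$ and $G(t,\psi) := \psi(0)g(\psi)\mathbf{1}_{\{0\}}$. Since $r$ is bounded on $[0,T]$ and $g$ is Lipschitz, the pair $(H,G)$ satisfies Assumption A, so the strong solution of the transformed SFDE is well defined and the Markov family of Section 2 is available.

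Next, the pricing formula
$$V(t,\psi) = \mathbb{E}^{\mathbb{Q}}_{t,\psi}\Bigl[e^{-\int_t^T r(s)\,ds}\lambda(S_T)\Bigr]$$
is precisely of the form treated in the time-inhomogeneous corollary of Theorem \ref{FeynmannkacStef}, with terminal datum $f=\lambda$ and nonnegative discount rate $c(s,\cdot)=r(s)$. Under the hypothesis $V \in \mathbf{C}^{1,2}_{\mathrm{Lip}}([0,T]\times\mathbf{C})\cap \mathcal{D}(S)$, that corollary yields the equation
$$\partial_t V(t,\psi) + \mathcal{A}_w V(t,\psi) - r(t)V(t,\psi) = 0, \qquad V(T,\psi)=\lambda(\psi),$$
with the terminal condition built in. Substituting the explicit form of $\mathcal{A}_w$ given by Theorem \ref{Mohammedgenerator} with the above $H$ and $G$, and rearranging $r(t)V$ onto the left-hand side, reproduces verbatim the displayed PDE.

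For the reverse direction, I would appeal to the viscosity result of Section 3: the theorem proved there for expectations of terminal payoff times a discount factor applies with the same data $(H,G,c,\lambda)$ and shows that the function $V$ above is a viscosity solution of the FPDE, without any a priori smoothness. Uniqueness then follows from the comparison principle stated at the end of Section 3. The main obstacle is step one in the path-dependent setting: one must justify the functional version of Girsanov's theorem for the memory coefficient $g(S_t)$ (i.e.\ verify a Novikov-type condition for the relative density $\exp\{-\int_0^T \theta(S_s)\,dW_s - \tfrac{1}{2}\int_0^T \theta(S_s)^2\,ds\}$ with $\theta(\psi) = (f(\psi)-r)/g(\psi)$) and to argue market completeness in the hereditary framework so that the $\mathbb{Q}$-expectation coincides with the no-arbitrage price. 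Once these functional Girsanov details are in place, the remainder of the argument is bookkeeping that matches the generators on the two sides.
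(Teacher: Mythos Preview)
The paper does not supply an explicit proof of this theorem; it only records the pricing formula $V(t,\psi)=\mathbb{E}^t_{\psi}[e^{-\int_t^T r(s)ds}\lambda(S_T)]$ via ``classical arguments for Trading Strategy and Equivalent Martingale Measure'' and then states the PDE, so the argument is left implicit as an application of the Feynman-Kac machinery developed in Section~3 (together with the reference \cite{CY}). Your proposal is exactly that implicit argument made explicit: pass to the risk-neutral measure so that the drift becomes $r(t)\psi(0)$, recognise the pricing expectation as the Feynman-Kac representation with discount $c=r$, and read off the PDE from the time-inhomogeneous corollary of Theorem~\ref{FeynmannkacStef}; for the converse invoke the viscosity theorem and the comparison principle of Section~3. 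This matches the paper's intended route.

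One small correction of notation: in the SFDE the drift and diffusion coefficients are $\Real^d$-valued, namely $H(t,\psi)=r(t)\psi(0)$ and $G(t,\psi)=\psi(0)g(\psi)$; the factor $\mathbf{1}_{\{0\}}$ only enters when these are fed into the extended Fr\'echet derivatives $\overline{DV}$, $\overline{D^2V}$ in the generator of Theorem~\ref{Thm-InfinitesimalGenerator}, not in the coefficients themselves. Your identification of the ``main obstacle'' (verifying a Novikov-type condition for the functional Girsanov density with $\theta(\psi)=(f(\psi)-r)/g(\psi)$) is apt and is precisely the point the paper glosses over with the phrase ``classical arguments''.
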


%

\end{document}